\newcommand{\rd}{\, \mathrm{d}}
\newcommand{\rand}{\mathrm{rand}}
\newcommand{\bszero}{\boldsymbol{0}}
\newcommand{\bsh}{\boldsymbol{h}}
\newcommand{\bsl}{\boldsymbol{\ell}}
\newcommand{\bsx}{\boldsymbol{x}}
\newcommand{\bsy}{\boldsymbol{y}}
\newcommand{\bsz}{\boldsymbol{z}}
\newcommand{\bsgamma}{\boldsymbol{\gamma}}
\newcommand{\bsDelta}{\boldsymbol{\Delta}}
\newcommand{\EE}{\mathbb{E}}
\newcommand{\NN}{\mathbb{N}}
\newcommand{\RR}{\mathbb{R}}
\newcommand{\ZZ}{\mathbb{Z}}
\newcommand{\Acal}{\mathcal{A}}
\newcommand{\Hcal}{\mathcal{H}}
\newcommand{\Pcal}{\mathcal{P}}
\newcommand{\tmod}[1]{{\;(\text{mod}\; #1)}}
\newcommand{\e}{\mathbb{E}}
\newcommand{\bsone}{\boldsymbol{1}}
\newcommand{\icomp}{\mathtt{i}}
\newcommand{\abs}[1]{\left\vert#1\right\vert}
\newcommand{\norm}[1]{\left\Vert#1\right\Vert}
\theoremstyle{thmstyleone}%
\newtheorem{theorem}{Theorem}
\newtheorem{algorithm}[theorem]{Algorithm}
\newtheorem{remark}[theorem]{Remark}
\newtheorem{lemma}[theorem]{Lemma}
\newtheorem{corollary}[theorem]{Corollary}
\theoremstyle{thmstyletwo}%
\newtheorem{case}{Case}
\begin{document}

\title[$L_2$-approximation using median lattice algorithms]{{$L_2$}-approximation using median lattice algorithms}

\author[1]{\fnm{Zexin} \sur{Pan}}\email{zexin.pan@oeaw.ac.at}

\author[1]{\fnm{Peter} \sur{Kritzer}}\email{peter.kritzer@oeaw.ac.at}

\author*[2]{\fnm{Takashi} \sur{Goda}}\email{goda@frcer.t.u-tokyo.ac.jp}

\affil[1]{\orgdiv{Johann Radon Institute for Computational and Applied Mathematics (RICAM)}, \orgname{Austrian Academy of Sciences}, \orgaddress{\street{Altenbergerstr.\ 69}, \city{Linz}, \postcode{4040}, \country{Austria}}}

\affil*[2]{\orgdiv{Graduate School of Engineering}, \orgname{The University of Tokyo}, \orgaddress{\street{7-3-1 Hongo, Bunkyo-ku}, \city{Tokyo}, \postcode{113-8656}, \country{Japan}}}

\abstract{In this paper, we study the problem of multivariate $L_2$-approximation of functions belonging to a weighted Korobov space. We propose and analyze a median lattice-based algorithm, inspired by median integration rules, which have attracted significant attention in the theory of quasi-Monte Carlo methods. Our algorithm approximates the Fourier coefficients associated with a suitably chosen frequency index set, where each coefficient is estimated by taking the median over approximations from randomly shifted rank-1 lattice rules with independently chosen generating vectors. We prove that the algorithm achieves, with high probability, a convergence rate of the $L_2$-approximation error that is arbitrarily close to optimal with respect to the number of function evaluations. Furthermore, we show that the error bound depends only polynomially on the dimension, or is even independent of the dimension, under certain summability conditions on the weights. Numerical experiments illustrate the performance of the proposed median lattice-based algorithm.}

\keywords{multivariate $L_2$-approximation, median algorithm, lattice algorithm, Korobov spaces, quasi-Monte Carlo methods, tractability}

\pacs[MSC Classification]{41A25, 41A63, 65D15, 65D30, 65Y20}

\maketitle

\section{Introduction}

In this paper, we study the problem of multivariate $L_2$-approximation in \textit{weighted Korobov spaces} of smoothness $\alpha$, denoted by $\Hcal_{d,\alpha,\bsgamma}$. Our goal is to derive a probabilistic result that yields a competitive error bound with high probability. This result, in turn, leads to an almost optimal rate of the randomized (worst-case root-mean-squared) $L_2$-approximation error. In this context, $d$ is the number of variables the elements of the space depend on (frequently referred to as the \textit{dimension} of the approximation problem), $\alpha$ is the \textit{smoothness parameter} and $\bsgamma=(\gamma_{\mathfrak{u}})_{\mathfrak{u}\in \{1{:}d\}}$ is a collection of \textit{weights}. Here and in the following, we write $\{1{:}V\}$ to denote the set $\{1,\ldots,V\}$ for a positive integer $V$. The basic idea of weights is to assign a positive number 
to every possible group of variables $\mathfrak{u}\in \{1{:}d\}$, which describes how much influence the respective groups of variables have in the problem. Large values of $\gamma_{\mathfrak{u}}$ indicate higher influence, low values of $\gamma_{\mathfrak{u}}$ indicate less influence. 
This concept was introduced in the seminal paper \cite{sloan1998when}.
We will give the detailed definition of the spaces $\Hcal_{d,\alpha,\bsgamma}$ and further comments on the weights in \Cref{sec:preliminaries}.

Korobov spaces are Hilbert spaces of $d$-variate, one-periodic functions with absolutely convergent Fourier series. These spaces have been studied frequently in the context of numerical integration and function approximation (see, e.g., \cite{dick2022lattice, kuo2013high, sloan1994lattice} for overviews), but have---in a slightly different form---recently also attracted interest in the literature on neural networks, see \cite{liu2024approx}.

A natural approach for approximating functions in Korobov spaces is to truncate the Fourier series first and then approximate the remaining Fourier coefficients, see, e.g., \cite{kuo2006lattice, novak2004tractability} and the references therein. Specifically, we define a finite set $\Acal\subset\ZZ^d$, and approximate $f\in \Hcal_{d,\alpha,\bsgamma}$ by truncating its Fourier expansion to the frequencies in $\Acal$ first:
\[ f(\bsx)=\sum_{\bsh\in \ZZ^d}\widehat{f}(\bsh)\exp(2\pi \icomp\bsh\cdot \bsx) \approx \sum_{\bsh\in \Acal}\widehat{f}(\bsh)\exp(2\pi \icomp\bsh\cdot \bsx),\]
where 
\[
 \widehat{f}({\bsh}):=\int_{[0,1]^d} f(\bsx) \exp({-2\pi\icomp \bsh\cdot \bsx}) \rd \bsx
\]
denotes the $\bsh$-th Fourier coefficient of $f$ and $\cdot$ denotes the standard inner product in $\RR^d$.
Here, the set $\Acal$ needs to be chosen carefully so that we can handle the truncation error and, simultaneously, that the computational cost of an approximation algorithm based on $\Acal$ is not too high. 

Once we have selected $\Acal$, the next step is to approximate each Fourier coefficient $\widehat{f}(\bsh)$ for every $\bsh\in\Acal$. Since each Fourier coefficient is defined as an integral over $[0,1]^d$, as shown above, a suitable integration rule can be used for this approximation. In this paper, we employ \textit{quasi-Monte Carlo (QMC) rules} for this purpose. QMC rules are equal-weight quadrature rules of the form 
\[
Q_{N,d}(g)=\frac{1}{N}\sum_{k=0}^{N-1} g(\bsx_k),
\]
where the integration nodes $\bsx_0,\ldots,\bsx_{N-1}$ are chosen deterministically in a way that matches the properties of the integrand $g$. The standard choice for integrating functions in Korobov spaces is to use \textit{lattice point sets} as integration nodes, yielding \textit{lattice rules} $Q_{N,d}$ (we again refer to \Cref{sec:preliminaries} for details). This principal idea goes back to Korobov, see \cite{korobov1963numbertheoretic} and also \cite{hua1963applications}. We will mostly follow the paper \cite{kuo2006lattice} in our notation, but we also refer to \cite{li2003trigonometric,temlyakov1985approximaterussian, temlyakov1993approximate, zeng2006error} for related results.

As shown in \cite{kuo2006lattice} and related works, using a single lattice rule with $M$ nodes to approximate the Fourier coefficients yields a convergence rate of the worst-case (i.e., considering the supremum over all functions in the unit ball of the space) $L_2$-approximation error (see \Cref{sec:preliminaries}) arbitrarily close to $\mathcal{O}(M^{-\alpha/2})$. In a slightly more general setting, \cite{byrenheid2017tight} proves that the rate $\mathcal{O}(M^{-\alpha/2})$ cannot be improved with such an approach for any $d\ge 2$. However, as shown in \cite{kaemmerer2019constructing} and \cite{kammerer2019approximation}, this convergence rate can be improved in a probabilistic sense by employing so-called \textit{multiple lattices}. Moreover, in the recent paper \cite{CGK24} by Cai, Goda, and Kazashi, the authors introduced a randomized lattice-based algorithm for $L_2$-approximation and proved that one can achieve a convergence rate arbitrarily close to $\mathcal{O}(M^{-\alpha/2 - 1/8})$ of the worst-case root-mean-squared $L_2$-approximation error. The same paper proves that this randomized algorithm cannot achieve a better rate than $\mathcal{O}(M^{-\alpha/2 - 1/2})$.

The key idea that we present in this paper is to adopt a probabilistic framework by using so-called \textit{median lattice rules} to approximate all Fourier coefficients $\widehat{f}(\bsh)$, $\bsh\in \Acal$, instead of relying on a single fixed lattice rule. Median integration rules, as previously studied in, e.g., \cite{goda2022free, goda2024universal, pan2023super-pol, pan2024super-pol}, rely on a randomly chosen or suitably randomized point set within a QMC rule. By repeating this random experiment $R$ times and then taking the median of the resulting estimates, one obtains a quadrature rule that can often be applied over a wide range of parameters, such as smoothness and weights, without the need to specify them in advance, while still achieving excellent convergence rates.
In the present context, we apply a median rule based on $R$ independent lattice rules, each using $N$ nodes, requiring a total of $M=RN$ function evaluations.
The number of repetitions $R$ must grow with $N$, but only at a rate proportional to $\log N$, making the approach computationally feasible even for large $N$.

Our main result shows that the proposed algorithm achieves a convergence rate arbitrarily close to $\mathcal{O}(M^{-\alpha})$ with high probability for individual functions in $\Hcal_{d,\alpha,\bsgamma}$.
Moreover, we show that, under suitable summability conditions on the weights $\bsgamma$, the dependence of both the required number of repetitions $R$ and the probabilistic error bound on the dimension $d$ can be significantly reduced or even eliminated. This property links our result to the field of \emph{Information-Based Complexity}, see \cite{novak2008tractability}, and ensures that high-dimensional problems remain computationally feasible.
Although this main result does not necessarily imply that the proposed algorithm achieves the \emph{worst-case} $L_2$-approximation error of $\mathcal{O}(M^{-\alpha+\varepsilon})$ with high probability (see \Cref{sec:discussion}), it can be shown that the algorithm does achieve the \emph{randomized} (worst-case root-mean-squared) $L_2$-approximation error of $\mathcal{O}(M^{-\alpha+\varepsilon})$.
Importantly, this convergence rate is optimal in the sense that no randomized algorithm based on $M$ function evaluations can achieve a better rate (see again \Cref{sec:discussion}).

The rest of the paper is structured as follows. 
In \Cref{sec:preliminaries}, we define the space $\Hcal_{d,\alpha,\bsgamma}$, formally state the approximation problem, and introduce rank-1 lattice rules. 
\Cref{sec:median} presents our median lattice-based algorithm and shows the main result of this paper on the $L_2$-approximation error bound. 
\Cref{sec:tractability} examines to what extent and under what conditions the curse of dimensionality can be avoided in our approximation error bounds.
\Cref{sec:numerics} reports numerical results illustrating the performance of the proposed algorithm.
The paper concludes with a discussion in \Cref{sec:discussion}.

\section{Preliminaries and notation}\label{sec:preliminaries}

Below, we denote the set of integers by $\ZZ$ and the set of positive integers by $\NN$. Furthermore, as usual, we denote the exponential function by $\exp$, and write $\exp(1)=e$ for short. Bold symbols denote vectors, where the length is usually clear from the context. Further notation will be introduced as necessary.

\subsection{Weighted Korobov spaces}
Let $\bsgamma=(\gamma_{\mathfrak{u}})_{\mathfrak{u}\in\{1{:}d\}}$ be a collection of positive reals, which we refer to as the weights in the following. As mentioned in the introduction, the weight $\gamma_{\mathfrak{u}}$ models the influence of the group of variables $x_j$ with indices $j\in \mathfrak{u}$. In this paper, we restrict ourselves to the most common case of weights, namely \textit{product weights}, 
which are characterized by a non-increasing positive sequence $(\gamma_j)_{j\ge 1}$, where we assume that $1\ge \gamma_1 \ge \gamma_2 \ge \cdots$. In this case we choose 
\[
\gamma_{\mathfrak{u}}:=\prod_{j\in \mathfrak{u}} \gamma_j,
\]
and put $\gamma_{\emptyset}:=1$.
We remark that also other choices of weights are considered in the literature, for which one may also allow single weights $\gamma_{\mathfrak{u}}$ to be zero. However, to avoid technical difficulties, we restrict ourselves to the product weight case in the present paper. We also note that the assumption that $1\ge \gamma_1$ could be dropped without changing the nature of the results in this paper, but resulting in additional technical notation. For the sake of simplicity, we therefore stay with the assumption $1\ge \gamma_1$ in the following. For other kinds of weights and the corresponding function spaces, see, e.g., \cite{dick2022lattice, kuo2013high}.

For product weights $\bsgamma$ as above, and a real number $\alpha>1/2$, we define the function
\[
r_{2\alpha,\bsgamma} (\bsh):=\prod_{j=1}^d \max \left\{\frac{\abs{h_j}^{2\alpha}}{\gamma_j}, 1\right\}
\]
for $\bsh=(h_1,\ldots,h_d)\in \ZZ^d$. 

We now define the weighted Korobov space $\Hcal_{d,\alpha,\bsgamma}$, which is a reproducing kernel Hilbert space consisting of one-periodic functions on $[0,1]^d$, where periodicity is understood with respect to each variable. The reproducing kernel and the inner product in $\Hcal_{d,\alpha,\bsgamma}$ are given, respectively, by
\[ K_{d, \alpha, \bsgamma}(\bsx, \bsy)=\sum_{\bsh \in \ZZ^{d}} \frac{\exp (2 \pi i \bsh \cdot(\bsx-\bsy))}{r_{2\alpha,\bsgamma}(\bsh)}, \]
and
\[ \langle f, g\rangle_{d, \alpha, \bsgamma}=\sum_{\bsh \in \ZZ^{d}}\hat{f}(\bsh)\, \overline{\hat{g}(\bsh)}\, r_{2\alpha,\bsgamma}(\bsh), \] 
where, as already defined in the introduction, $\widehat{f}({\bsh})$ denotes the $\bsh$-th Fourier coefficient of $f$.
The induced norm is then given by $\|f\|_{d,\alpha,\bsgamma}=\sqrt{\langle f, f\rangle_{d, \alpha, \bsgamma}}$.

Note that $\Hcal_{d,\alpha,\bsgamma}$ is a subspace of $L_2 ([0,1]^d)$, and that every element of the Korobov space with $\alpha>1/2$ can be represented pointwise by its Fourier series. Moreover, it is known that if $\alpha\in\NN$, we have that $\Hcal_{d,\alpha,\bsgamma}$ consists of all functions whose mixed partial derivatives up to order $\alpha-1$ in each variable are absolutely continuous and whose mixed derivatives of order $\alpha$ belong to $L_2 ([0,1]^d)$ (see \cite{dick2022component,kritzer2019lattice} for details). For this reason, $\alpha$ is referred to as the smoothness parameter. 

\subsection{$L_2$-approximation}\label{subsec:approximation}
Our goal is to study $L_2$-approximation of functions $f\in \Hcal_{d,\alpha,\bsgamma}$. Formally, this corresponds to approximating the embedding operator $S:\Hcal_{d,\alpha,\bsgamma}\to L_2 ([0,1]^d)$, $S(f)=f$. For a given deterministic approximation algorithm $A:\Hcal_{d,\alpha,\bsgamma}\to L_2 ([0,1]^d)$, a common error measure is the worst-case error, defined as
\begin{align}\label{eq:worst-case}
  \mathrm{err}(\Hcal_{d,\alpha,\bsgamma}, L_2, A):= \sup_{\substack{f\in \Hcal_{d,\alpha,\bsgamma}\\ 
  \norm{f}_{d,\alpha,\bsgamma}\le 1}}
  \|f-A(f)\|_{L_2},
\end{align}
i.e., one considers the worst performance of $A$ over the unit ball of $\Hcal_{d,\alpha,\bsgamma}$. 

When we speak of a randomized approximation algorithm, we refer to a pair consisting of a probability space $(\Omega, \Sigma, \mu)$ and a family of mappings $A = (A^\omega)_{\omega \in \Omega}$, where each $A^\omega$ is a deterministic approximation algorithm for fixed $\omega \in \Omega$. 
In the randomized setting, a common error measure is the worst-case root-mean-squared error, also referred to as the \emph{randomized error}, defined by
\begin{align}\label{eq:worst-case_randomized}
  \mathrm{err}^{\mathrm{ran}}(\Hcal_{d,\alpha,\bsgamma}, L_2, (A^{\omega})):= \sup_{\substack{f\in \Hcal_{d,\alpha,\bsgamma}\\ 
  \norm{f}_{d,\alpha,\bsgamma}\le 1}}
  \left(\EE_{\omega}\left[\|f-A^{\omega}(f)\|^2_{L_2}\right]\right)^{1/2}.
\end{align}
Alternatively, one can consider the $(\epsilon,\delta)$-approximation framework, see for instance \cite{kunsch2019optimal}, which asks for which pairs $(\epsilon,\delta)$ the algorithm satisfies
\begin{align}\label{eq:eps-delta_framework} 
\mathrm{Pr}_{\omega}\left[\|f-A^{\omega}(f)\|_{L_2}>\epsilon \right]\le \delta \qquad \text{for all $f$ with $\norm{f}_{d,\alpha,\bsgamma}\le 1$.} \end{align}

Our main result (see \Cref{cor:error_bound} below) states that, with high probability, a median-based lattice algorithm can achieve an error bound of order $M^{-\alpha+\varepsilon}$ for individual functions in $\Hcal_{d,\alpha,\bsgamma}$, where $M$ denotes the total number of function evaluations used in $A$ (see \Cref{sec:median} for the precise definition).
This is equivalent to saying that our proposed algorithm satisfies the $(\epsilon,\delta)$-approximation framework \eqref{eq:eps-delta_framework} with small $\delta$ and $\epsilon = \mathcal{O}(M^{-\alpha+\varepsilon})$ for arbitrarily small $\varepsilon$.
This result, in turn, leads to an almost optimal rate of the randomized $L_2$-approximation error (see \Cref{sec:discussion}).
The algorithm is based on rank-1 lattice rules, introduced in the next subsection, and uses $R$ randomized instances of such rules, each with $N$ nodes. Thus, the total cost is $M = RN$.

\subsection{Rank-1 lattice rules}
Let us now introduce rank-1 lattice rules, which are a key building block of our median algorithm. For simplicity, we assume in the following that $N$ is a prime number. Let $\bsz=(z_1,\ldots,z_d)$ be a 
vector with each component in $\{1{:}(N-1)\}$. 
Using this vector, we define a lattice point set $\Pcal_N=\{\bsx_0,\ldots,\bsx_{N-1}\}$ as follows: 
for each $j\in\{1{:}d\}$ and $k\in\{0,\ldots,N-1\}$, the $j$-th component of the $k$-th point $\bsx_k$ is given by
\[
x_j^{(k)}:=\left\{\frac{k z_j}{N}\right\},
\]
where $\{y\}=y-\lfloor y \rfloor$ denotes the fractional part of a non-negative real number $y$. 
By repeating the above procedure for all $j\in\{1{:}d\}$ and $k\in\{0,\ldots,N-1\}$, we obtain the full lattice point set $\Pcal_N$. A QMC rule using $\Pcal_N$ is called a rank-1 lattice rule. 

Note that, for fixed $N$ and $d$, a lattice rule is fully characterized by the choice of the \textit{generating vector} $\bsz$. However, not all choices of $\bsz$ lead to lattice rules of sufficient quality for integration or approximation purposes. Fortunately, there exist fast construction algorithms that provide good generating vectors for given $N$, $d$, $\alpha$, and $\bsgamma$. For comprehensive overviews of the theory of lattice rules, we refer to the books \cite{dick2022lattice, sloan1994lattice}, and in particular to \cite[Chapters 3 and 4]{dick2022lattice} for details on the construction of good lattice rules. In this paper, however, we employ a randomized approach, in which the generating vectors $\bsz$ are chosen at random to generate the lattice point sets. Hence, we do not need to be concerned with construction algorithms. 

It is sometimes useful, and we shall also adopt this approach here, to introduce an additional random element when applying lattice rules. This is commonly achieved by a \textit{random shift} $\Delta\in [0,1)^d$. Given a lattice point set $\Pcal_N=\{\bsx_0,\ldots,\bsx_{N-1}\}$ and drawing $\bsDelta$ from the uniform distribution over $[0,1)^d$, the corresponding \textit{randomly shifted lattice rule} applied to a function $g$ is given by 
\[
   Q_{N,d,\bsDelta}(g):=\frac{1}{N}\sum_{k=0}^{N-1}g \left(\left\{\bsx_k + \bsDelta\right\}\right),
\]
i.e., all points of $\Pcal_N$ are shifted modulo one by the same $\bsDelta$.
Here, the operation $\{\cdot\}$ is applied component-wise to a vector.
If we wish to emphasize the role of the generating vector $\bsz$ of $\Pcal_N$, we will write $Q_{N,d,\bsz,\bsDelta}$ instead.

\section{The median algorithm and its error}\label{sec:median}
In this section, we introduce our median lattice-based algorithm and analyze its $L_2$-approximation error. 

\subsection{Median lattice-based algorithm}
We assume product weights $\bsgamma$ with $\gamma_j \in (0,1]$ for all $j\ge 1$, and a smoothness parameter $\alpha>1/2$.
These parameters are assumed fixed but otherwise arbitrary. 

For a real $L\ge 1$, we define
\[
\Acal_d(L) := \left\{ \bsh \in \ZZ^d : r_{2\alpha, \bsgamma}(\bsh) \leq L^{2\alpha} \right\}=\left\{ \bsh \in \ZZ^d : r_{1, \bsgamma^{1/(2\alpha)}}(\bsh) \leq L\right\}.
\]
Additionally, for $R\in\NN$, we define the median of the complex numbers $Z_1,\dots,Z_R$ as
\[ 
\mathrm{median}_{r\in\{1{:}R\}}(Z_r)=\mathrm{median}_{r\in\{1{:}R\}} \Re(Z_r) + \icomp\cdot \mathrm{median}_{r\in\{1{:}R\}} \Im (Z_r). 
\]
When $R$ is odd, the median of $R$ real numbers is uniquely defined, ensuring that the above definition is well-defined. Although the case with even $R$ can be analyzed in a similar manner, we assume in the following that $R$ is an odd integer. Moreover, we will assume that $N$ is a prime 
number. While this assumption is not strictly necessary for all results below, it simplifies certain aspects of the analysis and will be adopted henceforth.

We now introduce the \emph{median lattice-based algorithm} for $L_2$-approximation in the weighted Korobov space $\Hcal_{d,\alpha,\bsgamma}$. 
\begin{algorithm}\label{alg:median}
    Let $\alpha>1/2$ and $d \in \NN$, and let $\gamma_j \in (0,1]$ for $j = 1, \ldots, d$. Given an odd number of repetitions $R \in \NN$, a parameter $\tau > 0$, and a prime number $N$, do the following:
    \begin{enumerate}
        \item Define 
        \begin{equation}\label{eqn:PN}
        P_N(\tau,d,\bsgamma):=\prod_{j=1}^d \left(1+ 2\gamma_j^{1/(2\alpha)} (1+\tau\log N)\right),
        \end{equation}
        and
        \begin{equation}\label{eqn:Nstar}
        N_*:=\frac{N-1 }{\exp(\tau^{-1})P_{N}(\tau,d,\bsgamma)}.
        \end{equation}
        \item \textbf{For} $r$ from $1$ to $R$, do the following:
        \begin{enumerate}
            \item Randomly draw $\bsz_{r}$ from the uniform distribution over the set $\{1{:}(N-1)\}^d$.
            \item Randomly draw $\bsDelta_r$ from the uniform distribution over 
            $[0,1)^d$.
            \item For all $\bsh\in \Acal_d(N_*)$, set
            \[ 
            \widehat{f}_{N,\bsz_r,\bsDelta_r}(\bsh):=
            \frac{1}{N} \sum_{k=0}^{N-1} f\left( \left\{ \frac{k \bsz_r}{N}+\bsDelta_r \right\}\right)\exp\left(-2\pi \icomp \bsh \cdot \left( \frac{k \bsz_r}{N}+\bsDelta_r\right)\right).
            \]   
        \end{enumerate}
       \textbf{end for}
       \item For all $\bsh\in \Acal_d(N_*)$, define the aggregation
       \[ 
            \widehat{f}_{N,\bsz_{\{1{:}R\}},\bsDelta_{\{1{:}R\}}}(\bsh):=\mathrm{median}_{r\in\{1{:}R\}} \Big(\widehat{f}_{N,\bsz_r,\bsDelta_r}(\bsh)\Big),
       \]
       where $\bsz_{\{1{:}R\}},\bsDelta_{\{1{:}R\}}$ denote the collections of $\bsz_r$ and $\bsDelta_r$, for $r\in\{1{:}R\}$, respectively.
       Define the final approximation by
        \[ 
        (A^{\rand}_{N,\bsz_{\{1{:}R\}},\bsDelta_{\{1{:}R\}},\Acal_d(N_*)}(f))(\bsx):= \sum_{\bsh \in \Acal_d(N_*)} \widehat{f}_{N,\bsz_{\{1{:}R\}},\bsDelta_{\{1{:}R\}}}(\bsh) \exp (2 \pi \icomp \bsh \cdot \bsx),\]
\end{enumerate}
\end{algorithm}

Before proceeding to the error analysis, we summarize some remarks concerning the algorithm.

\begin{remark}
    Note that for computing $\widehat{f}_{N,\bsz_r,\bsDelta_r}(\bsh)$, the function evaluations required of $f$ do not depend on $\bsh$. Thus, a total of $RN$ evaluations of $f$ are required to run \Cref{alg:median}. Furthermore, due to the independence between different repetitions $r$, the second step of \Cref{alg:median} can be performed in parallel.
\end{remark}

\begin{remark}\label{rem:limitation}
    The use of the \emph{median} aggregation over $R$ independent estimators mitigates the impact of outliers caused by unfavorable choices of generating vectors $\bsz_r$ or shifts $\bsDelta_r$. This approach follows the same principle as the median trick used in recent works on numerical integration \cite{goda2022free,goda2024universal,pan2023super-pol,pan2024super-pol}. A key distinction from \cite{goda2022free} is that our approximation algorithm requires the input of the parameter $\alpha$ and the weights $\gamma_j$, which are necessary for defining the index set $\Acal_{d}(N_*)$. Specifically, the quantity $N_*$ is defined in \eqref{eqn:Nstar}, controlling the trade-off between truncation error and the error arising from the estimation of Fourier coefficients. Making the algorithm universal, i.e., applicable without specifying these parameters, will be explored in a subsequent work.
\end{remark}

\begin{remark} 
    For an implementation of \Cref{alg:median}, \Cref{rmk:choosingRN} specifies the requirements on $N$ and $R$ for the algorithm to succeed with $1-\delta$ probability, given $\tau > 0$ and $\delta \in (0,1)$. \Cref{rmk:choosingtau} provides further guidance on selecting the parameter $\tau$ when there is a budget limit on the total number of function evaluations, i.e., when $RN \leq M_{\max}$.
\end{remark}

\subsection{Error analysis}
To prove our main result, we begin by presenting several lemmas. The first lemma improves upon Lemma 1 from~\cite{kuo2006lattice} concerning the cardinality of the hyperbolic cross $\Acal_d(L)$. The subsequent corollary applies this result to bound the size of the index set $\Acal_d(N_*)$ used in \Cref{alg:median}.

\begin{lemma}\label{lem:Adcount} 
For $L\geq 1$, we have  
\begin{equation}\label{eqn:minoverq}
    |\Acal_d(L)|\leq 1+ \inf_{q> 1} \frac{L^{q}}{\zeta(q)} \prod_{j=1}^d \Big(1+2\gamma^{q/(2\alpha)}_j \zeta(q)\Big),
\end{equation}
where  $\zeta(q):=\sum_{n=1}^\infty n^{-q}$ is the Riemann zeta function. 
We further have, for any $\tau>0$,
    \begin{equation}\label{eqn:Adbound}
        |\Acal_d(L)|\leq 1+\frac{ L\exp(\tau^{-1})}{1+\tau\log {L} }P_{L}(\tau,d,\bsgamma),
    \end{equation}
    where $P_{L}(\tau,d,\bsgamma)$ is defined as in \eqref{eqn:PN}, but with $N\in\NN$ replaced by $L$.
\end{lemma}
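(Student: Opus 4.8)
The strategy is to bound the cardinality of the hyperbolic cross $\Acal_d(L)$ by a standard comparison with a generating function (Dirichlet series), and then to optimize the free parameter $q$ in a clean but slightly lossy way so that the infimum is replaced by an explicit choice depending on $\tau$.

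First, for \eqref{eqn:minoverq}: fix any $q>1$. Since every $\bsh\in\Acal_d(L)$ satisfies $r_{1,\bsgamma^{1/(2\alpha)}}(\bsh)\le L$, and in particular $r_{1,\bsgamma^{1/(2\alpha)}}(\bsh)\ge 1$ always, I would write, for $\bsh\ne\bszero$,
\[
1 \le \frac{L^q}{r_{1,\bsgamma^{1/(2\alpha)}}(\bsh)^q}
\]
and sum this inequality over all $\bsh\in\Acal_d(L)\setminus\{\bszero\}$, then extend the sum to all of $\ZZ^d\setminus\{\bszero\}$ (which only increases it). This gives
\[
|\Acal_d(L)| - 1 \le L^q \sum_{\bsh\in\ZZ^d\setminus\{\bszero\}} \frac{1}{r_{1,\bsgamma^{1/(2\alpha)}}(\bsh)^q}
\le L^q \sum_{\bsh\in\ZZ^d} \frac{1}{r_{1,\bsgamma^{1/(2\alpha)}}(\bsh)^q}.
\]
The last sum factorizes over coordinates because $r_{1,\bsgamma^{1/(2\alpha)}}$ is a product of $\max\{|h_j|/\gamma_j^{1/(2\alpha)},1\}$; each one-dimensional factor equals $\sum_{h\in\ZZ}\max\{|h|/\gamma_j^{1/(2\alpha)},1\}^{-q} = 1 + 2\sum_{h\ge 1}(\gamma_j^{1/(2\alpha)}/h)^q = 1+2\gamma_j^{q/(2\alpha)}\zeta(q)$. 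Dividing by $\zeta(q)$ at one extra slot — actually the $\zeta(q)$ in the denominator of \eqref{eqn:minoverq} comes from keeping one coordinate's contribution separate; I would verify the exact bookkeeping, but the cleanest route is to note $\sum_{\bsh\in\ZZ^d}r^{-q} = \prod_j(1+2\gamma_j^{q/(2\alpha)}\zeta(q))$ and that one factor dominates $\zeta(q)$, so $L^q\sum \le \frac{L^q}{\zeta(q)}\prod_j(1+2\gamma_j^{q/(2\alpha)}\zeta(q))$ after pulling out a $\zeta(q)$; taking the infimum over $q>1$ yields \eqref{eqn:minoverq}.

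For \eqref{eqn:Adbound}, I would specialize the parameter to $q = 1 + \frac{1}{\tau\log L}$ (for $L>1$; the case $L=1$ is trivial since $\Acal_d(1)$ contains only $\bszero$ and finitely many trivial frequencies, handled directly). With this choice, $L^q = L\cdot L^{1/(\tau\log L)} = L\exp(\tau^{-1})$, which produces the factor $L\exp(\tau^{-1})$ in \eqref{eqn:Adbound}. Next I need two estimates: (i) $\zeta(q)\ge \zeta(q)\big|_{\text{lower bound}}$ — more precisely, since $\zeta(q)\ge \frac{1}{q-1}$ for $q>1$ (from $\zeta(q) = \frac{1}{q-1}+\gamma_{\text{EM}}+\cdots \ge \frac{1}{q-1}$, or simply by integral comparison $\zeta(q)\ge\int_1^\infty x^{-q}\,dx$), we get $\zeta(q)\ge \frac{1}{q-1} = \tau\log L$, hence $\frac{1}{\zeta(q)}\le \frac{1}{\tau\log L}$ — but we want $\frac{1}{1+\tau\log L}$, so I'd instead use the sharper $\zeta(q) = 1 + \frac{1}{q-1}\cdot(\text{something}) $; actually $\zeta(q)\ge 1+\frac{1}{q-1}$ is false, but $\zeta(q)\ge \frac{1}{q-1}$ combined with also $\zeta(q)\ge 1$ is not enough either. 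The correct elementary bound is $\zeta(q)>\frac{1}{q-1}$, and additionally the summand structure gives room to absorb the extra $1$; I will use $1/\zeta(q) \le (q-1) = \frac{1}{\tau\log L}$ and then bound $\frac{1}{\tau\log L}\le\frac{1}{1+\tau\log L}\cdot\frac{1+\tau\log L}{\tau\log L}$ — this introduces a constant I don't want. \textbf{The cleanest fix}, which I expect the authors use: bound $\zeta(q)\ge 1 + \frac{1}{q-1}$ is indeed valid because $\zeta(q) = \sum_{n\ge1}n^{-q} = 1 + \sum_{n\ge 2}n^{-q} \ge 1 + \int_2^\infty x^{-q}\,dx = 1 + \frac{2^{1-q}}{q-1}$, and $2^{1-q}\to 1$ as $q\to 1$; for $q$ close to $1$ this is close to $1+\frac{1}{q-1} = 1+\tau\log L$. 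I would therefore argue that for the relevant range ($\tau\log L$ not too small, or uniformly with a harmless adjustment) $\zeta(q)\ge 1+\tau\log L$ up to the $2^{1-q}$ factor, giving $\frac{1}{\zeta(q)}\le\frac{1}{1+\tau\log L}$ possibly after slightly adjusting constants. For (ii), I bound each product factor: $\gamma_j^{q/(2\alpha)}\zeta(q)\le \gamma_j^{1/(2\alpha)}\cdot(1+\tau\log L)$ up to lower-order terms — here $\gamma_j^{q/(2\alpha)}\le\gamma_j^{1/(2\alpha)}$ since $\gamma_j\le 1$ and $q>1$, and $\zeta(q)\le 1+\frac{1}{q-1} = 1+\tau\log L$ (valid by $\zeta(q)\le 1+\int_1^\infty x^{-q}dx = 1+\frac{1}{q-1}$). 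This last inequality is the clean one and gives exactly the factor $(1+2\gamma_j^{1/(2\alpha)}(1+\tau\log N))$ appearing in $P_N(\tau,d,\bsgamma)$ after renaming $L\mapsto N$. Combining: $|\Acal_d(L)|\le 1 + \frac{L\exp(\tau^{-1})}{1+\tau\log L}\prod_j(1+2\gamma_j^{1/(2\alpha)}(1+\tau\log L)) = 1+\frac{L\exp(\tau^{-1})}{1+\tau\log L}P_L(\tau,d,\bsgamma)$.

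\textbf{The main obstacle} I anticipate is getting the lower bound on $\zeta(q)$ to produce exactly $\frac{1}{1+\tau\log L}$ rather than $\frac{1}{\tau\log L}$ or a constant-times version: the naive integral bound $\zeta(q)\ge\frac{1}{q-1}$ gives the denominator $\tau\log L$ (which would actually be a \emph{stronger}, i.e.\ smaller, bound — so perhaps the weaker $\frac{1}{1+\tau\log L}$ is deliberately chosen for a cleaner statement and follows a fortiori from $\zeta(q)\ge 1$ combined with... no). I would need to check whether $\zeta(q)\ge 1+\tau\log L$ holds uniformly or only asymptotically, and if only asymptotically, either restrict $L$ to a range where $N_*\ge$ some threshold (which is what matters in the algorithm anyway) or absorb the discrepancy. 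The upper bounds $\zeta(q)\le 1+\frac{1}{q-1}$ and $\gamma_j^{q/(2\alpha)}\le\gamma_j^{1/(2\alpha)}$ are completely routine. So essentially all the subtlety is in the choice $q = 1+\frac{1}{\tau\log L}$ and the two-sided control of $\zeta$ near $q=1$; everything else is bookkeeping with the product structure of $r_{1,\bsgamma^{1/(2\alpha)}}$.
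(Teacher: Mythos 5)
Your Dirichlet-series strategy runs into a genuine gap precisely at the spot you flagged. Summing $(L/r_{1,\bsgamma^{1/(2\alpha)}}(\bsh))^q\ge 1$ over $\bsh\in\Acal_d(L)\setminus\{\bszero\}$ and extending to all of $\ZZ^d$ yields
\[
|\Acal_d(L)|-1 \le L^q\prod_{j=1}^d\bigl(1+2\gamma_j^{q/(2\alpha)}\zeta(q)\bigr),
\]
which is \emph{weaker} than \eqref{eqn:minoverq} by the factor $\zeta(q)>1$. Your subsequent remark that ``one factor dominates $\zeta(q)$, so \dots after pulling out a $\zeta(q)$'' does not rescue this: if one factor of the product is $\ge\zeta(q)$, that gives a \emph{lower} bound on the sum $\sum_{\bsh}r^{-q}$, not a license to replace the sum by $\zeta(q)^{-1}\prod_j(\cdots)$. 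The identity $\sum_{\bsh\in\ZZ^d}r^{-q}=\prod_j(1+2\gamma_j^{q/(2\alpha)}\zeta(q))$ is an equality, and dividing the right side by $\zeta(q)$ strictly decreases it, so the claimed inequality is simply false as an unconditional bound. Moreover, you cannot recover the loss by a lower bound on $\zeta$: with $q=1+1/(\tau\log L)$ you would need $\zeta(q)\ge 1+\tau\log L=1+1/(q-1)$, but integral comparison gives exactly the \emph{reverse}, $\zeta(q)\le 1+1/(q-1)$, with strict inequality for $q>1$. So no two-sided control of $\zeta$ near $q=1$ can supply the missing factor; the $\zeta(q)^{-1}$ has to already be present in the counting bound.

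The paper gets that factor from a different, more combinatorial argument that you have not reproduced. One decomposes $\Acal_d(L)$ by the support $u=\{j: h_j\ne 0\}$ and reduces to counting $\Acal^*_{|u|}(K)=\{\bsh\in\NN^{|u|}: \prod_j h_j\le K\}$ for $K=L\prod_{j\in u}\gamma_j^{1/(2\alpha)}$. The key step is the estimate $|\Acal^*_m(K)|\le \inf_{q>1}K^q\zeta(q)^{m-1}$, obtained by summing $\bigl\lfloor K/\prod_{j<m}h_j\bigr\rfloor$ over $h_1,\dots,h_{m-1}$ and using $\lfloor a\rfloor\le a\le a^q$ for $a\ge1$ together with an indicator truncation: the last coordinate $h_m$ is not summed with its own $\zeta(q)$ factor, so the exponent is $m-1$, not $m$. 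Substituting into the support decomposition produces $\zeta(q)^{|u|-1}$ rather than $\zeta(q)^{|u|}$ in each term, which is exactly the $\zeta(q)^{-1}$ you were missing, and then $\zeta(q)\le 1+\tau\log L$ converts it directly into the $1/(1+\tau\log L)$ factor of \eqref{eqn:Adbound}. That factor is not cosmetic: it is what makes Corollary~\ref{cor:AdNstarcount} give $|\Acal_d(N_*)|\lesssim (N-1)/(1+\tau\log N_*)$ and hence make the union bound in Theorem~\ref{thm:L2rate_constants} close. The remainder of your plan --- the choice $q=1+1/(\tau\log L)$, $L^q=L\exp(\tau^{-1})$, $\zeta(q)\le 1+1/(q-1)$, $\gamma_j^{q/(2\alpha)}\le\gamma_j^{1/(2\alpha)}$ --- matches the paper's optimization step and is fine once you start from the correct \eqref{eqn:minoverq}.
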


\begin{proof} 
For $K\geq 0$, define
\[ 
\Acal^*_d(K) :=\left\{\bsh\in \NN^d : \prod_{j=1}^d h_j \leq  K\right\}.
\]
Observe that
\begin{align*}
    \Acal_d(L)&=\bigcup_{u\subseteq \{1{:}d\}}\left\{\bsh\in \ZZ^d \, \colon\, r_{1, \bsgamma^{1/(2\alpha)}}(\bsh) \leq L,h_j=0 \text{ if and only if }j\notin u\right\}\\
    &=\bigcup_{u\subseteq \{1{:}d\}}\left\{\bsh\in \ZZ^d \, \colon\,  \prod_{j\in u} |h_j| \leq L\,\prod_{j\in u} \gamma_j^{1/(2\alpha)},h_j=0 \text{ if and only if }j\notin u\right\},
\end{align*}
where the second equality follows from the assumption $1\ge \gamma_1 \ge \gamma_2 \ge \cdots$. 
Thus, we have
\begin{equation}\label{eqn:Addecompose}
    |\Acal_d(L)|=1+\sum_{\substack{u\subseteq \{1{:}d\}\\u\neq\emptyset}} 2^{|u|} \Big|\Acal^*_{|u|}\Big(L\,\prod_{j\in u} \gamma_j^{1/(2\alpha)}\Big)\Big|. 
\end{equation}
It suffices to bound $|\Acal^*_d(K)|$. When $d=1$, $|\Acal^*_d(K)|=\lfloor K\rfloor\leq K$. For $d\ge 2$, note that, given the values of $h_1,\dots,h_{d-1}$, there are $\Big\lfloor \Big(\prod_{j=1}^{d-1}h_j\Big)^{-1}K\Big\rfloor$ choices for $h_d$. 
Therefore, when $K\geq 1$,
\begin{align}\label{eqn:Adstarbound}
    |\Acal^*_d(K)|&=\sum_{h_1=1}^\infty \dots \sum_{h_{d-1}=1}^\infty \left\lfloor \left(\prod_{j=1}^{d-1}h_j\right)^{-1}K\right\rfloor\nonumber \\
    &\leq \inf_{q\geq 1} K^q\sum_{h_1=1}^\infty \dots \sum_{h_{d-1}=1}^\infty  \left(\prod_{j=1}^{d-1}h_j\right)^{-q}\,\bsone\left\{\prod_{j=1}^{d-1}h_j\leq K\right\}\nonumber\\
    &\leq \inf_{q\geq 1} K^q \prod_{j=1}^{d-1}\left(\sum_{h_j=1}^\infty h_j^{-q}\,\bsone\{h_j\leq K\}\right) \\
    &\leq \inf_{q> 1} K^q \zeta(q)^{d-1}, \nonumber
\end{align}
where $\bsone A$ denotes 
the indicator function of a set $A$. 
Substituting the above bound into \eqref{eqn:Addecompose}, we obtain
\begin{align}\label{eqn:Adboundsumu}
    |\Acal_d(L)|-1&\leq \inf_{q>1}  \sum_{\substack{u\subseteq \{1{:}d\}\\u\neq \emptyset}} 2^{|u|} \left(\prod_{j\in u} \gamma_j\right)^{q/(2\alpha)} L^q \zeta(q)^{|u|-1}\bsone\left\{L\prod_{j\in u} \gamma_j^{1/(2\alpha)} \ge 1\right\}\\
    & \leq   \inf_{q>1}  \frac{L^q}{\zeta(q)} \sum_{\substack{u\subseteq \{1{:}d\}\\u\neq \emptyset}} 2^{|u|} 
    \zeta(q)^{|u|} \prod_{j\in u} \gamma_j^{q/(2\alpha)} \nonumber\\
    &\le \inf_{q>1}  \frac{L^{q}}{\zeta(q)} \prod_{j=1}^d \Big(1+2\gamma^{q/(2\alpha)}_j \zeta(q)\Big).\nonumber
\end{align}
This completes the proof of \eqref{eqn:minoverq}.

Next, for $\tau>0$, we set
\[ q:=1+\frac{1}{\tau \log L} \]
in the infimum above to get an upper bound. It follows that
\[ L^{q}=\exp\left(\log L + \log L \frac{1}{\tau \log L}\right)=L\exp(\tau^{-1}). \]
Meanwhile, since
\[ \sum_{n=1}^\infty \frac{1}{n^q}\leq 1+\int_1^\infty \frac{1}{x^q}dx, \]
we obtain the bound
\[ \zeta(q)\leq 1+ \frac{1}{q-1}=1+\tau \log L. \]
Using this estimate and $\sup_{j\in \{1{:}d\}}\gamma_j\leq 1$, we can derive from \eqref{eqn:Adboundsumu} that
\begin{align*}
    |\Acal_d(L)|-1 &\leq L^q \sum_{\substack{u\subseteq \{1{:}d\}\\u\neq \emptyset}} 2^{|u|} 
    \zeta(q)^{|u|-1}  \prod_{j\in u} \gamma_j^{q/(2\alpha)} \\
    & \leq L\exp(\tau^{-1}) \sum_{\substack{u\subseteq \{1{:}d\}\\u\neq \emptyset}} 2^{|u|}  (1+\tau \log L)^{|u|-1}\prod_{j\in u} \gamma_j^{1/(2\alpha)}  \\
    & =\frac{ L\exp(\tau^{-1})}{1+\tau\log L}  \sum_{\substack{u\subseteq \{1{:}d\}\\u\neq \emptyset}} 2^{|u|} (1+\tau \log L)^{|u|} \prod_{j\in u} \gamma_j^{1/(2\alpha)} \\
    & \leq  \frac{ L\exp(\tau^{-1})}{1+\tau\log L}  \prod_{j=1}^d \Big(1+2\gamma^{1/(2\alpha)}_j (1+\tau\log L)\Big)\\
    &= \frac{ L\exp(\tau^{-1})}{1+\tau\log L} P_{L}(\tau,d,\bsgamma).
\end{align*}
This completes the proof of \eqref{eqn:Adbound}.
\end{proof}

\begin{corollary}\label{cor:AdNstarcount} 
    Let $\tau>0$.
    Suppose that $N$ is a prime number and let $N_*$ be defined as in \eqref{eqn:Nstar}. Then, whenever $N_*\geq 1$, it holds that 
    \[ |\Acal_d(N_*)|\leq 1+\frac{ N-1}{1+\tau\log N_*}.\]
\end{corollary}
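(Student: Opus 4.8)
The plan is to apply the bound \eqref{eqn:Adbound} from \Cref{lem:Adcount} with $L = N_*$ and the given $\tau$, and then simplify using the explicit definition of $N_*$ in \eqref{eqn:Nstar}. Since we assume $N_* \geq 1$, the hypothesis $L \geq 1$ of the lemma is satisfied, so we immediately get
\[
|\Acal_d(N_*)| \leq 1 + \frac{N_* \exp(\tau^{-1})}{1 + \tau \log N_*} P_{N_*}(\tau, d, \bsgamma).
\]

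The key step is then to observe how the definition $N_* = (N-1)/(\exp(\tau^{-1}) P_N(\tau, d, \bsgamma))$ interacts with this expression. Multiplying $N_*$ by $\exp(\tau^{-1})$ clears the exponential factor, leaving $(N-1)/P_N(\tau,d,\bsgamma)$ in the numerator. The remaining task is to control $P_{N_*}(\tau,d,\bsgamma)$ against $P_N(\tau,d,\bsgamma)$ and $1+\tau\log N_*$ against $1+\tau\log N$. Here the monotonicity of both quantities in $L$ is what I would use: since $N_* \leq N-1 < N$ (which follows because $\exp(\tau^{-1}) P_N(\tau,d,\bsgamma) \geq 1$, as each factor in the product defining $P_N$ is at least $1$ and $\exp(\tau^{-1}) > 1$), we have $P_{N_*}(\tau,d,\bsgamma) \leq P_N(\tau,d,\bsgamma)$ because $P_L$ is nondecreasing in $L$ (each factor $1 + 2\gamma_j^{1/(2\alpha)}(1+\tau\log L)$ increases with $L$), and similarly $1 + \tau\log N_* \leq 1 + \tau \log N$. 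Combining these:
\[
|\Acal_d(N_*)| \leq 1 + \frac{N_* \exp(\tau^{-1}) P_N(\tau,d,\bsgamma)}{1 + \tau \log N_*} = 1 + \frac{N-1}{1 + \tau \log N_*}.
\]

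The only mild subtlety, which I would state carefully, is the direction of the inequality: we want $P_{N_*} \leq P_N$ in the numerator (so we need $N_* \leq N$, which holds), but we keep $1 + \tau \log N_*$ in the denominator rather than replacing it by $1 + \tau \log N$, since a smaller denominator gives a weaker (hence still valid) upper bound — so in fact no replacement in the denominator is needed, and the stated form with $\log N_*$ is exactly what drops out. I expect the main obstacle to be nothing more than making sure the chain of monotonicity observations is stated cleanly and that $N_* \geq 1$ (the standing hypothesis) is enough to invoke \eqref{eqn:Adbound}; there is no hard analytic content beyond \Cref{lem:Adcount} itself.
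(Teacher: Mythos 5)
Your proof is correct and follows essentially the same approach as the paper: apply the bound \eqref{eqn:Adbound} with $L = N_*$, substitute the definition of $N_*$ to cancel $\exp(\tau^{-1})$, and use the monotonicity of $P_L(\tau,d,\bsgamma)$ in $L$ together with $N_* \le N$ to bound $P_{N_*}(\tau,d,\bsgamma)/P_N(\tau,d,\bsgamma) \le 1$. The clarification at the end about keeping $1+\tau\log N_*$ (rather than $1+\tau\log N$) in the denominator is accurate and matches the stated form of the corollary.
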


\begin{proof}
    By the bound in \eqref{eqn:Adbound}, we have
    \begin{align*}
        |\Acal_d(N_*)|&\leq 1+ \frac{N_*\exp(\tau^{-1})}{1+\tau\log N_*} P_{N_*}(\tau,d,\bsgamma) \\
        & = 1+ \frac{N-1}{1+\tau\log N_*} \frac{P_{N_*}(\tau,d,\bsgamma)}{P_{N}(\tau,d,\bsgamma)} \\
        &\leq 1+ \frac{N-1}{1+\tau\log N_*},
    \end{align*}
    where the last inequality uses the facts that $N_*\leq N$ and that $P_{N}(\tau,d,\bsgamma)$ is increasing in $N$.
\end{proof}

Note that the condition $N_* \ge 1$ is precisely equivalent to
\begin{equation}\label{eq:cond_Nstar_one}
N \ge P_N (\tau, d, \bsgamma) \exp{(\tau^{-1})} +1.
\end{equation}

\begin{remark}
The bound \eqref{eqn:minoverq} can be improved by defining the partial sum $H_N(q):=\sum_{n=1}^N n^{-q}$ and applying the estimate
\[ |\Acal^*_d(K)|\leq \inf_{q\geq 1} K^q  H_{\lfloor K\rfloor}(q)^{d-1}\]
from \eqref{eqn:Adstarbound}. Although we omit the derivation details, it follows from this estimate (under the assumption $\sup_{j\in \{1{:}d\}}\gamma_j\leq 1$) that
\begin{align}\label{eqn:Adfinerbound}
    |\Acal_d(L)|-1 \leq \inf_{q\geq 1}  \frac{L^{q}}{ H_{L}(q)} \prod_{j=1}^d \Big(1+2\gamma^{q/(2\alpha)}_j  H_{L}(q)\Big).
\end{align}

The minimization in \eqref{eqn:Adfinerbound} can be carried out by solving for the zero of the logarithmic derivative of the right-hand side of \eqref{eqn:Adfinerbound}, i.e.,
\[ \log L -\frac{H'_L(q)}{H_L(q)}+\sum_{j=1}^d\frac{\alpha^{-1}\log(\gamma_j)\gamma^{q/(2\alpha)}_j  H_{L}(q)+2\gamma^{q/(2\alpha)}_j  H'_{L}(q)}{1+2\gamma^{q/(2\alpha)}_j  H_{L}(q)}=0. \]
Introducing the shorthand
\[ w_j(q)=\frac{2\gamma^{q/(2\alpha)}_j  H_{L}(q)}{1+2\gamma^{q/(2\alpha)}_j  H_{L}(q)},\]
this equation becomes
\begin{equation}\label{eqn:HNprime}
    -\frac{H'_L(q)}{H_L(q)}\left(\sum_{j=1}^d w_j(q)-1\right)-\frac{1}{2\alpha}\sum_{j=1}^d w_j(q)\log(\gamma_j)=\log L. 
\end{equation}

Note that $H'_L(q) < 0$ for all $q \geq 1$, and
\begin{align*}
   \left( \frac{H'_L(q)}{H_L(q)} \right)' &= \frac{H''_L(q)H_L(q) - \left(H'_L(q)\right)^2}{H_L(q)^2} \\
   &= \frac{1}{H_L(q)^2} \left( \sum_{n=1}^L \frac{(\log n)^2}{n^q} \sum_{n=1}^L \frac{1}{n^q} - \left( \sum_{n=1}^L \frac{\log n}{n^q} \right)^2 \right) > 0.
\end{align*}
Hence, $-H'_L(q)/H_L(q)$ is positive and strictly decreasing for $q \geq 1$. Furthermore, each $w_j(q)$ is strictly decreasing in $q$, and since $\log(\gamma_j) \leq 0$, the entire left-hand side of \eqref{eqn:HNprime} is strictly decreasing over $[1,\overline{q}]$, where $\overline{q}$ satisfies $\sum_{j=1}^d w_j(\overline{q})=1$. 

For the univariate case $d=1$, the global minimizer is $q=1$ as long as $\gamma^{1/2\alpha}_1 L>1$. When $d\ge 2$, the minimizer over $[1,\overline{q}]$ becomes the global minimizer for sufficiently large $L$. To show this, observe
\[ \lim_{q\to 1^+}\lim_{L\to\infty}\sum_{j=1}^d w_j(q)=\lim_{q\to 1^+}\sum_{j=1}^d \frac{2\gamma^{q/(2\alpha)}_j  \zeta(q)}{1+2\gamma^{q/(2\alpha)}_j  \zeta(q)}=d>1.\]
Hence for sufficiently small $\tau>0$, there exists $L_\tau \ge 1$ 
such that $\sum_{j=1}^d w_j(1+\tau)\geq 1$ for $L\geq L_\tau$. Using the estimate $H_L(1)\leq 1+ \log L$, we find
\[ \inf_{q\geq 1+\tau}  \frac{L^{q}}{ H_{L}(q)} \prod_{j=1}^d \left(1+2\gamma^{q/(2\alpha)}_j  H_{L}(q)\right)\geq \frac{L^{1+\tau}}{H_{L}(1)}\geq \frac{L^{1+\tau}}{1+ \log L}. \]
On the other hand, for $q=1+\tau/2$,
\[ \frac{L^{q}}{ H_{L}(q)} \prod_{j=1}^d \left(1+2\gamma^{q/(2\alpha)}_j  H_{L}(q)\right)\leq L^{1+\tau/2}\prod_{j=1}^d \left(1+2\gamma^{1/(2\alpha)}_j  (1+ \log L)\right), \]
which grows at a strictly slower rate than $L^{1+\tau}/(1+ \log L)$ as $L\to \infty$. Therefore, the global minimizer lies in $[1,1+\tau]\subset [1,\overline{q}]$ for large enough $L$. 

Although we rely on the bound \eqref{eqn:Adbound} for the subsequent analysis, one could alternatively use the improved bound \eqref{eqn:Adfinerbound} by choosing $q$ as the minimizer over $[1, \overline{q}]$. This minimization is numerically straightforward due to the monotonicity of the logarithmic derivative in \eqref{eqn:HNprime}.
\end{remark}

Let us proceed with our error analysis. To simplify notation, for a given $f\in\Hcal_{d,\alpha,\bsgamma}$, parameter $\tau>0$, and a prime number $N$, we write
\[ 
\epsilon(\bsh):=\left((\tau^{-1}+\log N_*) \left(\frac{\|f\|^2_{d,\alpha,\bsgamma}}{N_*^{2\alpha}(N-1) }+\sum_{\bsl \in N\cdot \ZZ^d\setminus \{\bszero\} }|\widehat{f}(\bsl+\bsh)|^2\right)\right)^{1/2},
\]
for $\bsh\in \ZZ^d$, where $N\cdot \ZZ^d=\{\bsl\in \ZZ^d : \ell_j \equiv 0 \tmod N \text{ for all } j\in \{1{:}d\}\}$. The following lemma establishes that, with some probability, each independent trial for estimating the Fourier coefficients corresponding to indices in the set $\Acal_d(N_*)$, i.e., $\widehat{f}_{N,\bsz_r,\bsDelta_r}(\bsh)$, for $r\in \{1{:}R\}$, yields a good approximation with the error bounded by $\epsilon(\bsh)$. 

\begin{lemma}\label{lem:concentration}
    Let $f\in\Hcal_{d,\alpha,\bsgamma}$, $\tau>0$, and let $N$ be prime such that $N_*\geq 1$.
    Using the notation introduced above, for any 
    $\bsh\in\Acal_d (N_*)$ and any $r\in\{1{:}R\}$, we have
    \[ 
    \Pr\Big(|\widehat{f}_{N,\bsz_r,\bsDelta_r}(\bsh)-\widehat{f}(\bsh)|^2>\epsilon(\bsh)^2\Big)
    \le  \frac{1+\tau}{1+\tau\log N_* }.
    \]
\end{lemma}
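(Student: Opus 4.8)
I would begin from the standard aliasing identity for a randomly shifted rank-$1$ lattice rule. Fix $\bsh\in\Acal_d(N_*)$ and an index $r\in\{1{:}R\}$, and set $g(\bsx):=f(\bsx)\exp(-2\pi\icomp\bsh\cdot\bsx)$, so that $\widehat{g}(\bsl)=\widehat{f}(\bsh+\bsl)$ for every $\bsl\in\ZZ^d$ and $\widehat{g}(\bszero)=\widehat{f}(\bsh)=\int_{[0,1]^d}g$. Since $f$, and hence $g$, has an absolutely convergent Fourier series, inserting the Fourier expansion of $g$ into the definition of $\widehat{f}_{N,\bsz_r,\bsDelta_r}(\bsh)$ and using $\frac1N\sum_{k=0}^{N-1}\exp(2\pi\icomp k\,\bsl\cdot\bsz_r/N)=\bsone\{\bsl\cdot\bsz_r\equiv0\tmod{N}\}$ gives
\[
E:=\widehat{f}_{N,\bsz_r,\bsDelta_r}(\bsh)-\widehat{f}(\bsh)=\sum_{\substack{\bsl\in\ZZ^d\setminus\{\bszero\}\\\bsl\cdot\bsz_r\equiv0\tmod{N}}}\widehat{f}(\bsh+\bsl)\exp(2\pi\icomp\bsl\cdot\bsDelta_r).
\]
The key observation is that simply bounding $\EE[|E|^2]$ and applying Markov's inequality is too weak: the aliasing contributions from the \emph{other} frequencies of $\Acal_d(N_*)$ need not be small. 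I would therefore first isolate the low-probability event on which such a collision occurs.

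Introduce the bad event
\[
\mathcal B:=\Big\{\,\exists\,\bsl\in\ZZ^d\setminus(N\cdot\ZZ^d)\ \text{with}\ \bsl\cdot\bsz_r\equiv0\tmod{N}\ \text{and}\ \bsh+\bsl\in\Acal_d(N_*)\,\Big\}.
\]
The vectors $\bsl\neq\bszero$ with $\bsh+\bsl\in\Acal_d(N_*)$ are exactly $\bsm-\bsh$ for $\bsm\in\Acal_d(N_*)\setminus\{\bsh\}$, so there are at most $|\Acal_d(N_*)|-1$ of them; and for any such $\bsl\notin N\cdot\ZZ^d$ at least one coordinate $\ell_{j_0}$ is not divisible by the prime $N$, so after fixing the remaining coordinates of $\bsz_r$ there is at most one admissible value of $z_{r,j_0}\in\{1{:}(N-1)\}$, giving $\Pr(\bsl\cdot\bsz_r\equiv0\tmod{N})\le1/(N-1)$. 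A union bound combined with \Cref{cor:AdNstarcount} then yields
\[
\Pr(\mathcal B)\le\frac{|\Acal_d(N_*)|-1}{N-1}\le\frac{1}{1+\tau\log N_*}.
\]

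On the complement $\mathcal B^c$ I would estimate $|E|$ in mean square. Conditionally on $\bsz_r$, the maps $\bsDelta_r\mapsto\exp(2\pi\icomp\bsl\cdot\bsDelta_r)$ are orthonormal in $L_2([0,1)^d)$, so $\EE_{\bsDelta_r}[|E|^2]=\sum_{\bsl\neq\bszero,\ \bsl\cdot\bsz_r\equiv0\,(N)}|\widehat{f}(\bsh+\bsl)|^2$. On $\mathcal B^c$, every $\bsl$ in this sum that lies outside $N\cdot\ZZ^d$ satisfies $\bsh+\bsl\notin\Acal_d(N_*)$; taking the expectation over $\bsz_r$, splitting off the deterministic contribution of $\bsl\in N\cdot\ZZ^d\setminus\{\bszero\}$ (which is always present) from the remaining terms (each occurring with probability at most $1/(N-1)$ as above), and using $r_{2\alpha,\bsgamma}(\bsm)>N_*^{2\alpha}$ for $\bsm\notin\Acal_d(N_*)$ together with the definition of $\norm{\cdot}_{d,\alpha,\bsgamma}$, one obtains
\begin{align*}
\EE\big[|E|^2\bsone_{\mathcal B^c}\big]
&\le\sum_{\bsl\in N\cdot\ZZ^d\setminus\{\bszero\}}|\widehat{f}(\bsh+\bsl)|^2+\frac{1}{N-1}\sum_{\bsm\notin\Acal_d(N_*)}|\widehat{f}(\bsm)|^2\\
&\le\frac{\norm{f}^2_{d,\alpha,\bsgamma}}{N_*^{2\alpha}(N-1)}+\sum_{\bsl\in N\cdot\ZZ^d\setminus\{\bszero\}}|\widehat{f}(\bsh+\bsl)|^2,
\end{align*}
which is exactly $\epsilon(\bsh)^2/(\tau^{-1}+\log N_*)$. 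Markov's inequality applied to $|E|^2\bsone_{\mathcal B^c}$ then gives $\Pr\big(|E|^2>\epsilon(\bsh)^2,\ \mathcal B^c\big)\le1/(\tau^{-1}+\log N_*)=\tau/(1+\tau\log N_*)$, and adding this to the bound on $\Pr(\mathcal B)$ produces $\Pr(|E|^2>\epsilon(\bsh)^2)\le\tfrac{1+\tau}{1+\tau\log N_*}$, as claimed.

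I expect the crux --- and the reason $\epsilon(\bsh)$ has precisely the form stated --- to be this two-part decomposition. One has to quantify, through the cardinality estimate of \Cref{cor:AdNstarcount}, that with probability at least $1-\tfrac{1}{1+\tau\log N_*}$ there is no collision among the frequencies in $\Acal_d(N_*)$ under the random lattice, and then show that on that favourable event the residual error is a genuine tail term of mean square $\tfrac{\norm{f}^2_{d,\alpha,\bsgamma}}{N_*^{2\alpha}(N-1)}$ plus the unavoidable $N\cdot\ZZ^d$-periodic aliasing term. The parameter $\tau$ then governs how the two contributions, of sizes $\tfrac{1}{1+\tau\log N_*}$ and $\tfrac{\tau}{1+\tau\log N_*}$, combine; in particular, no direct single-application of Markov's inequality to $\EE[|E|^2]$ alone can reach this bound.
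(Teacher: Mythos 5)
Your proof is correct and takes essentially the same route as the paper: you introduce the same ``bad event'' (a nonzero $\bsl\notin N\cdot\ZZ^d$ with $\bsl\cdot\bsz_r\equiv 0 \tmod N$ and $\bsh+\bsl\in\Acal_d(N_*)$), bound its probability via a union bound over at most $|\Acal_d(N_*)|-1$ frequencies together with \Cref{cor:AdNstarcount}, and then control $\EE[|E|^2\bsone_{\mathcal B^c}]$ by splitting off the deterministic $N\cdot\ZZ^d$-aliasing term and using $r_{2\alpha,\bsgamma}(\bsm)>N_*^{2\alpha}$ outside $\Acal_d(N_*)$ before applying Markov's inequality. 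The paper's only cosmetic difference is that it removes the event indicator by bounding it as $\bsone\{P_{N,\bsz_r}^\perp\cap\mathcal B_{\bsh}=\emptyset\}\le r_{2\alpha,\bsgamma}(\bsh+\bsl)/N_*^{2\alpha}$ before rearranging the double sum, whereas you restrict the sum to $\bsh+\bsl\notin\Acal_d(N_*)$ on $\mathcal B^c$; these are equivalent in substance.
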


\begin{proof}
Fix $\bsh\in\Acal_d (N_*)$ and $r\in\{1{:}R\}$. Furthermore, we define
\[ P_{N,\boldsymbol{z}_r}^\perp:=\{\bsl\in \ZZ^d : \bsz^T_r\cdot\bsl\equiv 0 \tmod N\}.\]
Note that $N\cdot \ZZ^d \subset P_{N,\boldsymbol{z}_r}^\perp$ holds for any $N$ and $\bsz_r$.
Then, as outlined, for example, in \cite[Proof of Lemma 4.1]{CGK24}, we have
\begin{align}
    \int_{[0,1]^d} |\widehat{f}_{N,\bsz_r,\bsDelta_r}(\bsh)-\widehat{f}(\bsh)|^2 \rd\bsDelta_r &  = \int_{[0,1]^d} \abs{\sum_{\bsl \in P_{N,\boldsymbol{z}_r}^\perp\setminus \{\bszero\}}\widehat{f}(\bsl+\bsh) \exp(2\pi\icomp \bsl\cdot\bsDelta_r)}^2\rd\bsDelta_r\nonumber\\
    & =\sum_{\bsl \in P_{N,\boldsymbol{z}_r}^\perp\setminus \{\bszero\}}|\widehat{f}(\bsl+\bsh)|^2.\label{eqn:deltamean}
\end{align}
Moreover, for each $\bsl\in\ZZ^d\setminus (N\cdot \ZZ^d)$, we denote by $V_{\bsl}$ the event $\bsl \in P_{N,\boldsymbol{z}_r}^\perp$. 
Then, it follows from, for instance, \cite[Lemma~4]{kritzer2019lattice} that
\[ 
\Pr(V_{\bsl})=\Pr(\bsz^T_r\cdot\bsl\equiv 0 \tmod N )\leq \frac{1}{N-1}.
\]

We now define 
\[
\mathcal{B}_{\bsh}=\left\{ \bsl \in \ZZ^d\setminus (N\cdot \ZZ^d)\, \colon\, r_{2\alpha, \bsgamma}(\bsh+\bsl)\leq N_*^{2\alpha}\right\}.
\]
Then, by \Cref{cor:AdNstarcount}, we have
\begin{align*}
|\mathcal{B}_{\bsh}| & =
\left|\left\{ \bsl \in\ZZ^d\setminus (N\cdot \ZZ^d)\, \colon \, r_{2\alpha, \bsgamma}(\bsh + \bsl) \leq N_*^{2\alpha} \right\}\right|\\
& \le \left|\left\{ \bsl \in\ZZ^d\setminus \{\bszero\}\, \colon \, r_{2\alpha, \bsgamma}(\bsh + \bsl) \leq N_*^{2\alpha} \right\}\right| \\
& = \left|\left\{ \bsl \in \ZZ^d\setminus \{\bsh\} \, \colon \, r_{2\alpha, \bsgamma}(\bsl) \leq N_*^{2\alpha}\right\}\right|\\
&\le \frac{N-1}{1+\tau\log N_*},
\end{align*}
where we have used the fact that $r_{2\alpha, \bsgamma}(\bsh) \leq N_*^{2\alpha}$ for $\bsh\in \Acal_d(N_*)$.

Therefore, the union bound argument shows
\begin{align}\label{eqn:dualspace}
\Pr(P_{N,\boldsymbol{z}_r}^\perp\cap \mathcal{B}_{\bsh} \neq \emptyset)
&= 
\Pr\left(\bigcup_{\bsl\in\mathcal{B}_{\bsh}} V_{\bsl} \right)\nonumber\\
&\le \sum_{\bsl\in\mathcal{B}_{\bsh}} \Pr (V_{\bsl}) \nonumber\\
&\le \frac{N-1}{1+\tau\log N_*}\cdot \frac{1}{N-1}\nonumber\\
&= \frac{1}{1+\tau\log N_*}.
\end{align}
Now we apply Markov's inequality to obtain
\begin{align}\label{eqn:Markov}
    & \Pr\Big(|\widehat{f}_{N,\bsz_r,\bsDelta_r}(\bsh)-\widehat{f}(\bsh)|^2>\epsilon(\bsh)^2\Big)\nonumber\\
    & = \Pr\Big(\{|\widehat{f}_{N,\bsz_r,\bsDelta_r}(\bsh)-\widehat{f}(\bsh)|^2> \epsilon(\bsh)^2\}\cap \{P_{N,\boldsymbol{z}_r}^\perp\cap \mathcal{B}_{\bsh}\neq \emptyset\}\Big) \nonumber\\
    & \quad +\Pr\Big(\{|\widehat{f}_{N,\bsz_r,\bsDelta_r}(\bsh)-\widehat{f}(\bsh)|^2> \epsilon(\bsh)^2\}\cap \{P_{N,\boldsymbol{z}_r}^\perp\cap \mathcal{B}_{\bsh}= \emptyset\}\Big) \nonumber\\
    & \leq \Pr(P_{N,\boldsymbol{z}_r}^\perp\cap \mathcal{B}_{\bsh}\neq \emptyset) \nonumber\\
    & \quad +\Pr\Big(\{\epsilon(\bsh)^{-2}|\widehat{f}_{N,\bsz_r,\bsDelta_r}(\bsh)-\widehat{f}(\bsh)|^2>1\}\cap \{\bsone\{P_{N,\boldsymbol{z}_r}^\perp\cap \mathcal{B}_{\bsh}= \emptyset\}= 1\}\Big) \nonumber\\
    & \leq \Pr(P_{N,\boldsymbol{z}_r}^\perp\cap \mathcal{B}_{\bsh}\neq \emptyset)\nonumber\\
    & \quad +\Pr\Big(\epsilon(\bsh)^{-2}|\widehat{f}_{N,\bsz_r,\bsDelta_r}(\bsh)-\widehat{f}(\bsh)|^2\bsone\{P_{N,\boldsymbol{z}_r}^\perp\cap \mathcal{B}_{\bsh}= \emptyset\}> 1\Big) \nonumber\\
    & \leq \frac{1}{1+ \tau\log N_*} 
    +\frac{1}{\epsilon(\bsh)^2}\e\Big[|\widehat{f}_{N,\bsz_r,\bsDelta_r}(\bsh)-\widehat{f}(\bsh)|^2\bsone\{P_{N,\boldsymbol{z}_r}^\perp\cap \mathcal{B}_{\bsh}= \emptyset\}\Big].
\end{align}
Here, the probability $\Pr$ and expectation $\e$ are taken over the random choices of $\bsz_r$ and $\bsDelta_r$.
From the equality \eqref{eqn:deltamean}, the inclusion $N\cdot \ZZ^d \subset P_{N,\boldsymbol{z}_r}^\perp$, and the fact that $\bsDelta_r$ is independent of $\bsz_r$, we have
\begin{align*}
    &\e\Big[|\widehat{f}_{N,\bsz_r,\bsDelta_r}(\bsh)-\widehat{f}(\bsh)|^2\bsone\{P_{N,\boldsymbol{z}_r}^\perp\cap \mathcal{B}_{\bsh}= \emptyset\}\Big]\\
    & = \frac{1}{(N-1)^d}\sum_{\bsz_r\in \{1{:}(N-1)\}^d}\sum_{\bsl \in P_{N,\boldsymbol{z}_r}^\perp\setminus \{\bszero\}}|\widehat{f}(\bsl+\bsh)|^2 \bsone\{P_{N,\boldsymbol{z}_r}^\perp\cap \mathcal{B}_{\bsh}= \emptyset\}\\
    & \leq \frac{1}{(N-1)^d}\sum_{\bsz_r\in \{1{:}(N-1)\}^d}\sum_{\bsl \in P_{N,\boldsymbol{z}_r}^\perp\setminus (N\cdot \ZZ^d) }|\widehat{f}(\bsl+\bsh)|^2 \frac{r_{2\alpha, \bsgamma}(\bsh+\bsl) }{N_*^{2\alpha}}\\
    & \quad +\frac{1}{(N-1)^d}\sum_{\bsz_r\in \{1{:}(N-1)\}^d}\sum_{\bsl \in N\cdot \ZZ^d\setminus \{\bszero\} }|\widehat{f}(\bsl+\bsh)|^2 \\
    & =\sum_{\bsl \in \ZZ^d\setminus (N\cdot \ZZ^d)}|\widehat{f}(\bsl+\bsh)|^2\frac{r_{2\alpha, \bsgamma}(\bsh+\bsl) }{N_*^{2\alpha}}\left(\frac{1}{(N-1)^d}\sum_{\bsz_r\in \{1{:}(N-1)\}^d}\bsone\{\bsl \in P_{N,\boldsymbol{z}_r}^\perp\}\right)\\
    & \quad +\sum_{\bsl \in N\cdot \ZZ^d\setminus \{\bszero\} }|\widehat{f}(\bsl+\bsh)|^2\\
    & = \frac{1}{N_*^{2\alpha}}\,\sum_{\bsl \in \ZZ^d\setminus (N\cdot \ZZ^d)}|\widehat{f}(\bsl+\bsh)|^2 \, r_{2\alpha, \bsgamma}(\bsh+\bsl)\,  \Pr(V_{\bsl})+\sum_{\bsl \in N\cdot \ZZ^d\setminus \{\bszero\} }|\widehat{f}(\bsl+\bsh)|^2\\
    & \leq  \frac{\|f\|^2_{d,\alpha,\bsgamma}}{N_*^{2\alpha}(N-1) }+\sum_{\bsl \in N\cdot \ZZ^d\setminus \{\bszero\} }|\widehat{f}(\bsl+\bsh)|^2.
\end{align*}
The conclusion follows once we substitute the above upper bound into \eqref{eqn:Markov}.
\end{proof}

The next lemma is about the probability amplification via the median trick. That is, in the previous lemma, we showed that each individual estimate is accurate with some probability. By applying the median trick to the aggregate estimate $\widehat{f}_{N,\bsz_{\{1{:}R\}},\bsDelta_{\{1{:}R\}}}(\bsh)$, we can enhance the overall success probability as the number of repetitions $R$ grows.

\begin{lemma}\label{lem:medianbound} 
    Let $f\in\Hcal_{d,\alpha,\bsgamma}$, $\tau>0$, and let $N$ be prime such that $N_*\geq 1$. Suppose that $R$ is an odd number. Then, for any $\bsh\in\Acal_d (N_*)$, we have
        \[ 
        \Pr\Big(|\widehat{f}_{N,\bsz_{\{1{:}R\}},\bsDelta_{\{1{:}R\}}}(\bsh)-\widehat{f}(\bsh)|^2> 2\epsilon(\bsh)^2\Big)
        \le  \,  \left(\frac{4(1+\tau)}{1+\tau\log N_* }\right)^{\lceil R/2 \rceil}.
        \]
\end{lemma}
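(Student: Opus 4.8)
The plan is to run the standard median-amplification argument (the ``median trick'' of \cite{goda2022free} and the related works cited above), the only wrinkle being the complex-valued median. Set $p:=\frac{1+\tau}{1+\tau\log N_*}$, so that the claimed bound is exactly $(4p)^{\lceil R/2\rceil}$, and for each $r\in\{1{:}R\}$ define the bad event
\[
B_r:=\Big\{|\widehat{f}_{N,\bsz_r,\bsDelta_r}(\bsh)-\widehat{f}(\bsh)|^2>\epsilon(\bsh)^2\Big\}.
\]
By \Cref{lem:concentration}, $\Pr(B_r)\le p$ for every $r$; and since the algorithm draws the pairs $(\bsz_r,\bsDelta_r)$ independently over $r$, the events $B_1,\dots,B_R$ are mutually independent.

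The deterministic core of the proof is the set inclusion
\[
\Big\{|\widehat{f}_{N,\bsz_{\{1{:}R\}},\bsDelta_{\{1{:}R\}}}(\bsh)-\widehat{f}(\bsh)|^2>2\epsilon(\bsh)^2\Big\}\ \subseteq\ \Big\{\text{at least }\lceil R/2\rceil\text{ of the events }B_r\text{ occur}\Big\}.
\]
To prove it, write $D_r:=\widehat{f}_{N,\bsz_r,\bsDelta_r}(\bsh)-\widehat{f}(\bsh)$ and $\bar D:=\widehat{f}_{N,\bsz_{\{1{:}R\}},\bsDelta_{\{1{:}R\}}}(\bsh)-\widehat{f}(\bsh)$; by definition of the complex median, $\Re\bar D=\mathrm{median}_r\Re D_r$ and $\Im\bar D=\mathrm{median}_r\Im D_r$. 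If $|\bar D|^2>2\epsilon(\bsh)^2$ then at least one of $(\Re\bar D)^2$, $(\Im\bar D)^2$ exceeds $\epsilon(\bsh)^2$; say $(\Re\bar D)^2>\epsilon(\bsh)^2$, i.e.\ $|\mathrm{median}_r\Re D_r|>\epsilon(\bsh)$. Since $R$ is odd, $\mathrm{median}_r\Re D_r$ is the $\lceil R/2\rceil$-th smallest of $\Re D_1,\dots,\Re D_R$, so at least $\lceil R/2\rceil$ of the $\Re D_r$ are $\ge\mathrm{median}_r\Re D_r$ and at least $\lceil R/2\rceil$ are $\le\mathrm{median}_r\Re D_r$; choosing the side on which the median is bounded away from $0$ by $\epsilon(\bsh)$, we obtain at least $\lceil R/2\rceil$ indices $r$ with $|\Re D_r|>\epsilon(\bsh)$, hence $|D_r|^2\ge(\Re D_r)^2>\epsilon(\bsh)^2$, i.e.\ $B_r$ occurs. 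The imaginary case is identical. (This is exactly where the factor $\sqrt2$ --- equivalently the $2\epsilon(\bsh)^2$ on the left --- enters.)

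It then remains to bound the probability of the right-hand event. If at least $\lceil R/2\rceil$ of the $B_r$ occur, then $\bigcap_{r\in S}B_r$ occurs for some $S\subseteq\{1{:}R\}$ with $|S|=\lceil R/2\rceil$; a union bound over all such $S$, combined with independence, gives
\[
\Pr\Big(\text{at least }\lceil R/2\rceil\text{ of the }B_r\text{ occur}\Big)\ \le\ \binom{R}{\lceil R/2\rceil}\,p^{\lceil R/2\rceil}\ \le\ 4^{\lceil R/2\rceil}\,p^{\lceil R/2\rceil},
\]
using the crude estimate $\binom{R}{\lceil R/2\rceil}\le 2^R\le 2^{2\lceil R/2\rceil}=4^{\lceil R/2\rceil}$. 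Substituting $p=\frac{1+\tau}{1+\tau\log N_*}$ yields the claimed bound.

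I do not expect a genuine obstacle here; the argument is routine probability. The only points that need care are the bookkeeping of the factor $2$ when splitting $|\bar D|^2$ into its squared real and imaginary parts, and stating the order-statistic step precisely for odd $R$, so that a deviation of the median genuinely forces a deviation of at least $\lceil R/2\rceil$ of the individual estimates; once that is in place, independence of the $B_r$ and the elementary binomial bound close the argument.
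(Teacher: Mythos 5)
Your proof is correct and takes essentially the same route as the paper: both use \Cref{lem:concentration} together with the standard median-trick argument that a deviation of the median forces at least $\lceil R/2\rceil$ of the $R$ independent estimates to deviate, and both use the crude combinatorial bound (at most $2^R$, or $\binom{R}{\lceil R/2\rceil}\le 4^{\lceil R/2\rceil}$, relevant subsets of $\{1{:}R\}$) to convert the per-trial failure probability $p=\tfrac{1+\tau}{1+\tau\log N_*}$ into $(4p)^{\lceil R/2\rceil}$. The only organizational difference is that you work with the single modulus events $B_r=\{|D_r|^2>\epsilon(\bsh)^2\}$ and observe that a real- or imaginary-part deviation of the median each forces $\lceil R/2\rceil$ of these $B_r$, whereas the paper bounds the real-part and imaginary-part failure probabilities separately (each by $\tfrac12(4p)^{\lceil R/2\rceil}$) and applies a final union bound — the two bookkeeping schemes land on the same final constant, so the difference is purely cosmetic.
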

\begin{proof} 
    By \Cref{lem:concentration}, for each $r\in \{1{:}R\}$, we have
    \begin{eqnarray*}
    \Pr\Big(|\Re\widehat{f}_{N,\bsz_r,\bsDelta_r}(\bsh)-\Re\widehat{f}(\bsh)|^2> \epsilon(\bsh)^2\Big)
    &\le& \frac{1+\tau}{1+\tau\log N_* }. 
    \end{eqnarray*}
    In order for $|\Re\widehat{f}_{N,\bsz_{\{1{:}R\}},\bsDelta_{\{1{:}R\}}}(\bsh)-\Re\widehat{f}(\bsh)|^2>\epsilon(\bsh)^2$ to hold, at least $\lceil R/2\rceil$ instances of the $R$ estimates must satisfy $|\Re\widehat{f}_{N,\bsz_r,\bsDelta_r}(\bsh)-\Re\widehat{f}(\bsh)|^2> \epsilon(\bsh)^2$. There are at most $2^R$ distinct subsets of $\{1{:}R\}$, so we obtain
    \begin{align*}
        \Pr\Big(|\Re\widehat{f}_{N,\bsz_{\{1{:}R\}},\bsDelta_{\{1{:}R\}}}(\bsh)-\Re\widehat{f}(\bsh)|^2> \epsilon(\bsh)^2\Big)
         &\le 2^R \,  \left(\frac{1+\tau}{1+\tau\log N_* } \right)^{\lceil R/2 \rceil}\\
        & = \,  \frac{1}{2}\left(\frac{4(1+\tau)}{1+\tau\log N_* }\right)^{\lceil R/2 \rceil},
    \end{align*}
    where we used the fact that $R$ is odd in the last step.
    A similar bound holds for the imaginary part, and the conclusion follows by applying the union bound.     
\end{proof}

We now arrive at the main theoretical result of this paper.
\begin{theorem}\label{thm:L2rate_constants}
    Let $\tau>0$ and $\delta \in (0,1)$ be given, and let $N$ be a prime sufficiently large that $1\leq N_*< N/2$ and
    \begin{equation}\label{eq:less_than_one}
    \frac{4(1+\tau)}{1+\tau\log N_* }<1.
    \end{equation}
    Moreover, let $R$ be an odd integer sufficiently large such that
    \begin{equation}\label{eq:choose_R_first}
    \Big(1+\frac{N-1}{1+\tau\log N_*}\Big)
    \left(\frac{4(1+\tau)}{1+\tau\log N_* }\right)^{\lceil R/2 \rceil}\le \delta.
    \end{equation}
    Then, with probability at least $1-\delta$, we have that for any $f\in \Hcal_{d,\alpha,\bsgamma}$,
    \[ \|f-A^{\rand}_{N,\bsz_{\{1{:}R\}},\bsDelta_{\{1{:}R\}},\Acal_d(N_*)}(f)\|^2_{L_2}\le \frac{\|f\|^2_{d,\alpha,\bsgamma}}{N^{2\alpha}_*}\Big(2\tau^{-1}+1 + \frac{2N}{N-1}\log(N-1) \Big).\]
\end{theorem}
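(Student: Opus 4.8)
The plan is to split the $L_2$-approximation error into a \emph{truncation error} (the tail of the Fourier series outside $\Acal_d(N_*)$) and an \emph{estimation error} (the error in the approximated Fourier coefficients on $\Acal_d(N_*)$), using Parseval's identity. Since $\Hcal_{d,\alpha,\bsgamma}$ consists of one-periodic functions represented pointwise by their Fourier series and is a subspace of $L_2([0,1]^d)$, orthonormality of the trigonometric system gives
\[
\|f-A^{\rand}_{N,\bsz_{\{1{:}R\}},\bsDelta_{\{1{:}R\}},\Acal_d(N_*)}(f)\|^2_{L_2}
= \sum_{\bsh\in\Acal_d(N_*)}\big|\widehat{f}_{N,\bsz_{\{1{:}R\}},\bsDelta_{\{1{:}R\}}}(\bsh)-\widehat{f}(\bsh)\big|^2
+ \sum_{\bsh\notin\Acal_d(N_*)}|\widehat{f}(\bsh)|^2.
\]
For the truncation term, by the definition of $\Acal_d(N_*)$ every $\bsh\notin\Acal_d(N_*)$ satisfies $r_{2\alpha,\bsgamma}(\bsh)>N_*^{2\alpha}$, so
\[
\sum_{\bsh\notin\Acal_d(N_*)}|\widehat{f}(\bsh)|^2
\le \frac{1}{N_*^{2\alpha}}\sum_{\bsh\in\ZZ^d}|\widehat{f}(\bsh)|^2 r_{2\alpha,\bsgamma}(\bsh)
= \frac{\|f\|^2_{d,\alpha,\bsgamma}}{N_*^{2\alpha}},
\]
which accounts for the ``$+1$'' inside the bracket in the claimed bound.

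For the estimation term, the idea is a union-bound-over-$\Acal_d(N_*)$ argument combined with \Cref{lem:medianbound}. Define the ``bad'' event for a fixed $\bsh$ as $\{|\widehat{f}_{N,\bsz_{\{1{:}R\}},\bsDelta_{\{1{:}R\}}}(\bsh)-\widehat{f}(\bsh)|^2 > 2\epsilon(\bsh)^2\}$; by \Cref{lem:medianbound} this has probability at most $\big(4(1+\tau)/(1+\tau\log N_*)\big)^{\lceil R/2\rceil}$. Taking the union over all $\bsh\in\Acal_d(N_*)$ and bounding $|\Acal_d(N_*)|\le 1+(N-1)/(1+\tau\log N_*)$ via \Cref{cor:AdNstarcount}, the probability that \emph{some} $\bsh$ is bad is at most the left-hand side of \eqref{eq:choose_R_first}, i.e.\ at most $\delta$. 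Hence, with probability at least $1-\delta$, \emph{simultaneously for all} $\bsh\in\Acal_d(N_*)$ we have $|\widehat{f}_{N,\bsz_{\{1{:}R\}},\bsDelta_{\{1{:}R\}}}(\bsh)-\widehat{f}(\bsh)|^2 \le 2\epsilon(\bsh)^2$, and crucially this event does not depend on $f$ (it is a statement about the randomly drawn $\bsz_r,\bsDelta_r$ and the lattice structure), so the quantifier ``for any $f$'' can be placed inside.

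On that good event, it remains to sum $2\epsilon(\bsh)^2$ over $\bsh\in\Acal_d(N_*)$ and show it is bounded by $\frac{\|f\|^2_{d,\alpha,\bsgamma}}{N_*^{2\alpha}}\big(2\tau^{-1} + \frac{2N}{N-1}\log(N-1)\big)$. Unfolding the definition of $\epsilon(\bsh)^2$, the sum is
\[
2(\tau^{-1}+\log N_*)\left(|\Acal_d(N_*)|\,\frac{\|f\|^2_{d,\alpha,\bsgamma}}{N_*^{2\alpha}(N-1)}
+ \sum_{\bsh\in\Acal_d(N_*)}\ \sum_{\bsl\in N\cdot\ZZ^d\setminus\{\bszero\}}|\widehat{f}(\bsl+\bsh)|^2\right).
\]
For the first piece, use $|\Acal_d(N_*)|\le 1+(N-1)/(1+\tau\log N_*)\le N$ (here I would use $N_*\ge 1$, so $1+\tau\log N_*\ge 1$, together with the fact that $|\Acal_d(N_*)|$ is an integer $\le N$; alternatively the crude bound $|\Acal_d(N_*)|\le N$ follows more directly), giving a contribution $\le 2(\tau^{-1}+\log N_*)\frac{N}{N-1}\frac{\|f\|^2_{d,\alpha,\bsgamma}}{N_*^{2\alpha}}$. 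For the double sum, the key observation is that the map $(\bsh,\bsl)\mapsto \bsh+\bsl$ is injective on $\Acal_d(N_*)\times(N\cdot\ZZ^d\setminus\{\bszero\})$: if $\bsh+\bsl=\bsh'+\bsl'$ then $\bsh-\bsh'\in N\cdot\ZZ^d$, but since $1\le N_*<N/2$ every coordinate of any $\bsh\in\Acal_d(N_*)$ has $|h_j|\le N_* < N/2$ (because $r_{2\alpha,\bsgamma}(\bsh)\le N_*^{2\alpha}$ forces $|h_j|^{2\alpha}/\gamma_j\le N_*^{2\alpha}$ and $\gamma_j\le 1$), so $\bsh-\bsh'\in N\cdot\ZZ^d$ with all coordinates of absolute value $<N$ forces $\bsh=\bsh'$ and then $\bsl=\bsl'$. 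Therefore the double sum is bounded by $\sum_{\bsg\in\ZZ^d}|\widehat{f}(\bsg)|^2\le\|f\|^2_{d,\alpha,\bsgamma}$ (using $r_{2\alpha,\bsgamma}\ge 1$). Combining, and noting $\tau^{-1}+\log N_*\le \tau^{-1}+\log(N-1)$ since $N_*\le N-1$, and $\log N_* \le \log(N-1)$, I would group the terms to reach exactly $\frac{\|f\|^2_{d,\alpha,\bsgamma}}{N_*^{2\alpha}}\big(2\tau^{-1}+1+\frac{2N}{N-1}\log(N-1)\big)$.

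The main obstacle is the bookkeeping in the final summation step — in particular justifying the injectivity of $(\bsh,\bsl)\mapsto\bsh+\bsl$ (which is exactly why the hypothesis $N_*<N/2$ is imposed) so that the double sum telescopes into a single Parseval-type sum bounded by $\|f\|^2_{d,\alpha,\bsgamma}$, and then carefully tracking the constants $2\tau^{-1}$, $1$, and $\frac{2N}{N-1}\log(N-1)$ so that the crude estimates $|\Acal_d(N_*)|\le N$ and $\log N_*\le\log(N-1)$ line up with the stated bound. Everything else is a routine union bound plus Parseval.
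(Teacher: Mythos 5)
Your overall architecture matches the paper's: Parseval decomposition into truncation error plus coefficient-estimation error, a union bound over $\Acal_d(N_*)$ combined with \Cref{lem:medianbound} and \Cref{cor:AdNstarcount} to get failure probability at most $\delta$, and then unfolding of $\epsilon(\bsh)^2$ on the good event. But there are two real problems.

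The first is a genuine mathematical gap in the bound on the aliasing double sum. You show that $(\bsh,\bsl)\mapsto\bsh+\bsl$ is injective on $\Acal_d(N_*)\times(N\cdot\ZZ^d\setminus\{\bszero\})$ (correct, and this is exactly what $N_*<N/2$ is for), and from injectivity you only conclude
\[
\sum_{\bsh\in\Acal_d(N_*)}\sum_{\bsl\in N\cdot\ZZ^d\setminus\{\bszero\}}|\widehat{f}(\bsh+\bsl)|^2\le\sum_{\bsg\in\ZZ^d}|\widehat{f}(\bsg)|^2\le\|f\|^2_{d,\alpha,\bsgamma}.
\]
This is too weak by a factor of $N_*^{2\alpha}$: the resulting second-piece contribution $2(\tau^{-1}+\log N_*)\|f\|^2_{d,\alpha,\bsgamma}$ has no $N_*^{-2\alpha}$ decay and cannot fold into the claimed $\frac{\|f\|^2_{d,\alpha,\bsgamma}}{N_*^{2\alpha}}\big(2\tau^{-1}+1+\frac{2N}{N-1}\log(N-1)\big)$. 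What you need, and what the paper uses, is the additional (equally easy) observation that every such image $\bsh+\bsl$ lies \emph{outside} $\Acal_d(N_*)$: if $\bsl\ne\bszero$ then some coordinate $|\ell_j|\ge N$ while $|h_j|<N/2$, hence $|h_j+\ell_j|>N/2>N_*$ and $r_{2\alpha,\bsgamma}(\bsh+\bsl)>N_*^{2\alpha}$. With that, the disjoint images land in $\ZZ^d\setminus\Acal_d(N_*)$, and the double sum is bounded by $\sum_{\bsg\notin\Acal_d(N_*)}|\widehat{f}(\bsg)|^2\le\frac{\|f\|^2_{d,\alpha,\bsgamma}}{N_*^{2\alpha}}$ exactly as in your truncation step. (Your constant bookkeeping for the first piece, using $|\Acal_d(N_*)|\le N$ instead of the sharper product form, also gives $\frac{2N}{N-1}\tau^{-1}$ where the stated bound has $2\tau^{-1}$; the paper gets the exact constant by writing $|\Acal_d(N_*)|\frac{\tau^{-1}+\log N_*}{N-1}\le\frac{\tau^{-1}+\log N_*}{N-1}+\tau^{-1}$, which uses the specific form of \Cref{cor:AdNstarcount}, and by noting $\tau^{-1}+\log N_*=\log(N-1)-\log P_N(\tau,d,\bsgamma)\le\log(N-1)$ from the definition of $N_*$, not merely $N_*\le N-1$.)

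The second problem is the claim that the good event ``does not depend on $f$'', so that the quantifier ``for any $f$'' can be placed inside the probability. This is false: the event $\{|\widehat{f}_{N,\bsz_{\{1:R\}},\bsDelta_{\{1:R\}}}(\bsh)-\widehat{f}(\bsh)|^2\le 2\epsilon(\bsh)^2\ \forall\bsh\}$ depends on $f$ through $\widehat{f}$, $\widehat{f}_{N,\bsz_r,\bsDelta_r}$, and $\epsilon(\bsh)$. What \Cref{lem:medianbound} gives is a per-$f$ probability bound, and the union bound preserves this: for each fixed $f$ the failure probability is at most $\delta$. The theorem should be read as $\forall f:\Pr(\cdot)\ge 1-\delta$, not $\Pr(\forall f:\cdot)\ge 1-\delta$; the paper is explicit about exactly this distinction in \Cref{sec:discussion}, where it is noted that the set of good realizations ``may depend on the specific function being approximated'' and that a high-probability \emph{worst-case} bound does not follow. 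Your misreasoning happens not to change the per-$f$ conclusion you were aiming for, but it would yield a (false) uniform statement if taken seriously.
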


It is worth noting that the probability that the error bound in \Cref{thm:L2rate_constants} fails decreases exponentially with increasing $R$.

\begin{proof}[Proof of \Cref{thm:L2rate_constants}]
    It follows from \Cref{cor:AdNstarcount}, \Cref{lem:medianbound}, and the assumption in the theorem that
    \begin{align*}
    & \Pr\Big(|\widehat{f}_{N,\bsz_{\{1{:}R\}},\bsDelta_{\{1{:}R\}}}(\bsh)-\widehat{f}(\bsh)|^2> 2\epsilon(\bsh)^2 \text{ for at least one }  \bsh\in \Acal_d(N_*)\Big)\\  
    &\le \sum_{\bsh\in \Acal_d(N_*)}\Pr\Big(|\widehat{f}_{N,\bsz_{\{1{:}R\}},\bsDelta_{\{1{:}R\}}}(\bsh)-\widehat{f}(\bsh)|^2> 2\epsilon(\bsh)^2\Big) \\
    &\le |\Acal_d(N_*)| \, \,  \left(\frac{4(1+\tau)}{1+\tau\log N_* }\right)^{\lceil R/2 \rceil} \\
    &\le  
    \Big(1+\frac{N-1}{1+\tau\log N_*}\Big)
    \left(\frac{4(1+\tau)}{1+\tau\log N_* }\right)^{\lceil R/2 \rceil} \le \delta.
    \end{align*}
    Therefore, the aggregated estimate satisfies $|\widehat{f}_{N,\bsz_{\{1{:}R\}},\bsDelta_{\{1{:}R\}}}(\bsh)-\widehat{f}(\bsh)|^2\leq 2\epsilon(\bsh)^2$ simultaneously for all $\bsh\in \Acal_d(N_*)$, with probability at least $1-\delta$.
    
    When this event occurs, by Parseval's identity, we have
    \begin{align*}
        &\|f-A^{\rand}_{N,\bsz_{\{1{:}R\}},\bsDelta_{\{1{:}R\}},\Acal_d(N_*)}(f)\|^2_{L_2}\\
        & = \sum_{\bsh\notin \mathcal{A}_d(N_*)} |\widehat{f}(\bsh)|^2+\sum_{\bsh\in \Acal_d(N_*)}|\widehat{f}_{N,\bsz_{\{1{:}R\}},\bsDelta_{\{1{:}R\}}}(\bsh)-\widehat{f}(\bsh)|^2\\
        & \le \sum_{\bsh\notin \mathcal{A}_d(N_*)} |\widehat{f}(\bsh)|^2+2\sum_{\bsh\in \Acal_d(N_*)}\epsilon(\bsh)^2\\
        & \le \sum_{\bsh\notin \mathcal{A}_d(N_*)} |\widehat{f}(\bsh)|^2 
        + 2 |\mathcal{A}_d(N_*)|\,\frac{\tau^{-1}+\log N_*}{N-1}\,\frac{\|f\|^2_{d,\alpha,\bsgamma}}{N_*^{2\alpha}}\\
        & \quad +2(\tau^{-1}+\log N_*)\sum_{\bsh\in \Acal_d(N_*)}\sum_{\bsl \in N\cdot \ZZ^d\setminus \{\bszero\} }|\widehat{f}(\bsl+\bsh)|^2.
    \end{align*}

    Again, by \Cref{cor:AdNstarcount}, it holds that
    \[ |\mathcal{A}_d(N_*)|\,\frac{\tau^{-1}+\log N_*}{N-1}\leq \frac{\tau^{-1}+\log N_*}{N-1}+\tau^{-1}.\]
    Since $P_N (\tau,d,\bsgamma)\geq 1$ and $N_*=(N-1)/(P_N (\tau,d,\bsgamma)\exp{(\tau^{-1})})$, we have
    \[ \tau^{-1}+\log N_*= \log (N-1)-\log(P_N (\tau,d,\bsgamma))\leq \log (N-1).\]
    
    Finally, $\bsh\in \mathcal{A}_d(N_*)$ implies $\bsh\in (-N/2,N/2)^d$ when $N_*<N/2$, so the shifted lattices $\bsh + N\cdot \ZZ^d$ are mutually disjoint for different $\bsh\in \mathcal{A}_d(N_*)$. Also, it is clear that for any $\bsh\in \mathcal{A}_d(N_*)$ and $\bsl \in N\cdot \ZZ^d\setminus \{\bszero\}$, we have $\bsl+\bsh\not\in \mathcal{A}_d(N_*)$. Therefore, 
    \begin{align*}
        \sum_{\bsh\in \Acal_d(N_*)}\sum_{\bsl \in N\cdot \ZZ^d\setminus \{\bszero\} }|\widehat{f}(\bsl+\bsh)|^2 & \leq \sum_{\bsh\notin \mathcal{A}_d(N_*)} |\widehat{f}(\bsh)|^2\\ & \leq \sum_{\bsh\notin \mathcal{A}_d(N_*)} |\widehat{f}(\bsh)|^2 \frac{r_{2\alpha, \bsgamma}(\bsh) }{N^{2\alpha}_*} \leq \frac{\|f\|^2_{d,\alpha,\bsgamma}}{N^{2\alpha}_*}.
    \end{align*}
    Hence,
    \begin{align*}
        & \|f-A^{\rand}_{N,\bsz_{\{1{:}R\}},\bsDelta_{\{1{:}R\}},\Acal_d(N_*)}(f)\|^2_{L_2}\\
        & \leq \frac{\|f\|^2_{d,\alpha,\bsgamma}}{N^{2\alpha}_*}+2\left(\tau^{-1}+\frac{\log(N-1)}{N-1}\right) \frac{\|f\|^2_{d,\alpha,\bsgamma}}{N^{2\alpha}_*} + 2\log(N-1) \frac{\|f\|^2_{d,\alpha,\bsgamma}}{N^{2\alpha}_*} 
    \end{align*}
    and our conclusion follows.
\end{proof}

\begin{remark}\label{rmk:choosingRN}
  Let us briefly discuss the assumptions in \Cref{thm:L2rate_constants}.
  
  \begin{enumerate}
  \item Note that \eqref{eq:less_than_one} is certainly satisfied, due to the definitions of $N_*$ and $P_N(\tau,d,\bsgamma)$, when
  \[ 
    \frac{4(1+\tau)}{\tau \log (N-1)-\tau\log (P_{N} (\tau,d,\bsgamma)) } \le c,
  \]
  for some constant $c \in (0,1)$, which in turn holds if
  \begin{equation}\label{eq:less_than_c}
  N \ge P_N (\tau,d,\bsgamma)\, \exp \left(\frac{4\tau^{-1}+4 }{c} \right) + 1.
  \end{equation}

   \item If $N$ is chosen according to \eqref{eq:less_than_c}, then \eqref{eq:cond_Nstar_one} is also satisfied, ensuring $N_* \ge 1$. Moreover, since
  \[ 
  N_* = \frac{N-1}{\exp(\tau^{-1})P_{N}(\tau,d,\bsgamma)} < \frac{N}{\exp(\tau^{-1})\left(1+2\sum_{j=1}^d \gamma_j^{1/(2\alpha)}(1+\tau \log N)\right)},
  \]
  we have $N_* < N/2$ for all prime $N$ as long as $\tau^{-1} \ge \log 2$ or $\sum_{j=1}^d \gamma_j^{1/(2\alpha)} \ge 1/2$. Since this condition can easily be satisfied by choosing either $\tau \le 1/\log 2$ or $\gamma_1 = 1$, we will not emphasize this assumption in the subsequent analysis.

  \item The condition \eqref{eq:choose_R_first} is fulfilled if
  \begin{multline*}
   \log \left(1+\frac{N-1}{\tau\log (N-1) - \tau\log (P_{N} (\tau,d,\bsgamma)) }\right) 
   \\ + \left\lceil \frac{R}{2} \right\rceil \log \left(\frac{4(1+\tau)}{\tau\log (N-1) - \tau\log (P_{N} (\tau, d,\bsgamma)) }\right) \le \log \delta.
  \end{multline*} 
  Indeed, suppose $c \in (0,1)$ and $N$ is chosen according to \eqref{eq:less_than_c}. Then it suffices for the odd integer $R$ to satisfy 
  \[
   \log \left(1 + \frac{c (N-1)}{4}\right)
   + \left\lceil \frac{R}{2} \right\rceil \log c \le \log \delta,
  \]
  or equivalently,
  \begin{equation}\label{eq:choose_R_c}
  \frac{R+1}{2} \log \left(c^{-1}\right) \ge
  \log \left(1 + \frac{c (N-1)}{4}\right) + \log(\delta^{-1}).
  \end{equation}
  \end{enumerate}
\end{remark}

Next, we state an important corollary of \Cref{thm:L2rate_constants}, which implies that---apart from logarithmic factors---\Cref{alg:median} achieves the optimal convergence rate (in terms of the number of function evaluations of a given function~$f$) for $L_2$-approximation, with high probability and for individual functions in $\Hcal_{d,\alpha,\bsgamma}$. 

\begin{corollary}\label{cor:error_bound}
 Let $\tau>0$ and $\delta \in (0,1)$ be given.
 Denote by $M = RN$ the total number of evaluations of $f$ required by the algorithm $A^{\rand}_{N,\bsz_{\{1{:}R\}},\bsDelta_{\{1{:}R\}},\Acal_d(N_*)}$.
 Then, there exist choices of a prime number $N$ and an odd integer $R$ such that for any $f \in \Hcal_{d,\alpha,\bsgamma}$, with probability at least $1 - \delta$,
 \begin{align*}
  \|f - A^{\rand}_{N,\bsz_{\{1{:}R\}},\bsDelta_{\{1{:}R\}},\Acal_d(N_*)}(f)\|^2_{L_2}
  \le C_N(\tau,\delta,\alpha)\, \frac{\|f\|^2_{d,\alpha,\bsgamma}}{M^{2\alpha}}
  \exp(2\alpha\tau^{-1})\, (P_N(\tau,d,\bsgamma))^{2\alpha},
 \end{align*}
 where
 \begin{align*}
  C_N(\tau,\delta,\alpha) &=
  \left(\frac{N}{N-1}\right)^{2\alpha}
  \left(2\log\left(1 + \frac{N-1}{4e}\right) + 2\log(\delta^{-1}) + 1\right)^{2\alpha} \\
  & \quad \times \left(2\tau^{-1} + 1 + \frac{2N}{N-1}\log(N-1)\right).
 \end{align*}
\end{corollary}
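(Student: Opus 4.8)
The plan is to obtain Corollary~\ref{cor:error_bound} as an essentially bookkeeping consequence of Theorem~\ref{thm:L2rate_constants}: I fix the free constant $c$ from Remark~\ref{rmk:choosingRN} to be $c = 1/e$, make the induced admissible choices of the prime $N$ and the odd integer $R$ explicit, invoke Theorem~\ref{thm:L2rate_constants}, and then rewrite the factor $N_*^{-2\alpha}$ appearing there in terms of $M = RN$. The choice $c = 1/e$ is what produces the clean constants in $C_N(\tau,\delta,\alpha)$, since then $\log(c^{-1}) = 1$ and $c(N-1)/4 = (N-1)/(4e)$.

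Concretely, first choose a prime $N$ large enough that \eqref{eq:less_than_c} holds with $c = 1/e$, i.e.\ $N \ge P_N(\tau,d,\bsgamma)\exp\!\big(e(4\tau^{-1}+4)\big) + 1$; enlarging $N$ further if necessary, also ensure $P_N(\tau,d,\bsgamma) \ge 2\exp(-\tau^{-1})$, which guarantees $N_* < N/2$. As recorded in Remark~\ref{rmk:choosingRN}, this single lower bound on $N$ then implies \eqref{eq:less_than_one}, gives $N_* \ge 1$ via \eqref{eq:cond_Nstar_one}, and together with the previous line secures $1 \le N_* < N/2$, so all hypotheses of Theorem~\ref{thm:L2rate_constants} are met. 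Next take $R$ to be the smallest odd integer satisfying \eqref{eq:choose_R_c}; with $c = 1/e$ this reads $\tfrac{R+1}{2} \ge \log\!\big(1 + (N-1)/(4e)\big) + \log(\delta^{-1})$, and minimality of $R$ yields
\[
 R \le 2\log\!\Big(1 + \frac{N-1}{4e}\Big) + 2\log(\delta^{-1}) + 1 .
\]
By Remark~\ref{rmk:choosingRN}(3) this $R$ also satisfies \eqref{eq:choose_R_first}.

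With these choices, Theorem~\ref{thm:L2rate_constants} gives, with probability at least $1-\delta$,
\[
 \|f - A^{\rand}_{N,\bsz_{\{1{:}R\}},\bsDelta_{\{1{:}R\}},\Acal_d(N_*)}(f)\|^2_{L_2} \le \frac{\|f\|^2_{d,\alpha,\bsgamma}}{N_*^{2\alpha}}\Big(2\tau^{-1} + 1 + \frac{2N}{N-1}\log(N-1)\Big).
\]
It then remains to substitute $N_* = (N-1)/(\exp(\tau^{-1})P_N(\tau,d,\bsgamma))$ and $N = M/R$:
\[
 \frac{1}{N_*^{2\alpha}} = \frac{\exp(2\alpha\tau^{-1})\,(P_N(\tau,d,\bsgamma))^{2\alpha}}{(N-1)^{2\alpha}} = \Big(\frac{N}{N-1}\Big)^{2\alpha}\frac{R^{2\alpha}}{M^{2\alpha}}\,\exp(2\alpha\tau^{-1})\,(P_N(\tau,d,\bsgamma))^{2\alpha},
\]
and finally bound $R^{2\alpha}$ using the displayed estimate on $R$ from the previous paragraph. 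Collecting the factor $(N/(N-1))^{2\alpha}$, the bound on $R^{2\alpha}$, and the bracket $(2\tau^{-1} + 1 + 2N\log(N-1)/(N-1))$ into $C_N(\tau,\delta,\alpha)$ reproduces exactly the claimed inequality.

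There is essentially no analytic obstacle beyond Theorem~\ref{thm:L2rate_constants} and Remark~\ref{rmk:choosingRN}; the only point requiring any care is the mild self-reference in the admissibility constraint \eqref{eq:less_than_c}, whose right-hand side involves $P_N(\tau,d,\bsgamma)$. This is harmless because $\log P_N(\tau,d,\bsgamma) = O(\log\log N)$ as $N\to\infty$, so the constraint, and likewise $P_N(\tau,d,\bsgamma) \ge 2\exp(-\tau^{-1})$, hold for all sufficiently large primes $N$; since the corollary only asserts the existence of suitable $N$ and $R$, this suffices. The remaining work is the routine algebraic rearrangement in the last display, which I would not expand further.
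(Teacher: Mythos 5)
Your proposal is correct and follows essentially the same route as the paper: fix $c=e^{-1}$, take $N$ large enough for \eqref{eq:less_than_c} (and hence for the hypotheses of \Cref{thm:L2rate_constants}), choose the odd $R$ via \eqref{eq:choose_R_c} with the resulting upper bound $R \le 2\log(1+(N-1)/(4e))+2\log(\delta^{-1})+1$, invoke \Cref{thm:L2rate_constants}, and rewrite $N_*^{-2\alpha}$ using $N_* = (N-1)/(\exp(\tau^{-1})P_N(\tau,d,\bsgamma))$ and $M=RN$. The only cosmetic difference is that you secure $N_*<N/2$ by enlarging $N$ until $P_N(\tau,d,\bsgamma)\ge 2\exp(-\tau^{-1})$, whereas the paper handles this via the sufficient conditions noted in \Cref{rmk:choosingRN}(2); both observations are valid and the final algebra is identical.
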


\begin{proof}
 Choose $c := \exp(-1)$, and a prime $N$ such that \eqref{eq:less_than_c} holds for this choice of $c$. 
 This is possible for sufficiently large $N$ because $P_N(\tau,d,\bsgamma)$ depends only logarithmically on $N$. Furthermore, with $c = \exp(-1)$, \eqref{eq:choose_R_c} is certainly satisfied if we choose the odd integer $R$ such that
    \begin{equation}\label{eq:choose_R_coro}
        2\log \left(1 + \frac{N-1}{4e}\right)+2\log(\delta^{-1})-1
        \le R \le 
        2\log \left(1 + \frac{N-1}{4e}\right)+2\log(\delta^{-1})+1.
    \end{equation}
  Next, we apply \Cref{thm:L2rate_constants} for these choices of $N$ and $R$ and obtain, with probability at least $1 - \delta$,
  \begin{align*}
  &\|f-A^{\rand}_{N,\bsz_{\{1{:}R\}},\bsDelta_{\{1{:}R\}},\Acal_d(N_*)}(f)\|^2_{L_2} \\
  &\le
  \frac{\|f\|^2_{d,\alpha,\bsgamma}}{N^{2\alpha}_*}\Big(2\tau^{-1}+1 + \frac{2N\log(N-1)}{N-1} \Big)\\
  &\le 
  \frac{\|f\|^2_{d,\alpha,\bsgamma}}{(N-1)^{2\alpha}}
  \,\Big(2\tau^{-1}+1 + \frac{2N\log(N-1)}{N-1} \Big)\, \exp(2\alpha\tau^{-1})\,
  (P_N (\tau,d,\bsgamma))^{2\alpha}\\
  &=
  \left(\frac{RN}{N-1}\right)^{2\alpha} \frac{\|f\|^2_{d,\alpha,\bsgamma}}{M^{2\alpha}}\,\Big(2\tau^{-1}+1 + \frac{2N\log(N-1)}{N-1} \Big)\,
  \exp(2\alpha\tau^{-1})\,
  (P_N (\tau,d,\bsgamma))^{2\alpha}.
  \end{align*}
  The result of the corollary now follows by replacing $R$ with the upper bound in \eqref{eq:choose_R_coro}.
\end{proof}

In the light of the $(\epsilon,\delta)$-approximation framework \eqref{eq:eps-delta_framework}, \Cref{cor:error_bound} directly shows that we have
\[ \epsilon = \frac{1}{M^{\alpha}}(C_N(\tau,\delta,\alpha))^{1/2}
  \exp(\alpha\tau^{-1})\, (P_N(\tau,d,\bsgamma))^{\alpha}\]
for any $\delta\in (0,1)$.
We remark that the upper bound obtained in \Cref{cor:error_bound}, along with the choices of $N$ and $R$, generally depends on the dimension $d$. This issue will be addressed in the following section.

\section{Dependence of the error bound on the dimension}\label{sec:tractability}
In this section, let us return, for given $f\in\Hcal_{d,\alpha,\bsgamma}$, to the approximation error of our median lattice-based algorithm,
\[
  \mathrm{err}(f,L_2, A^{\rand}_{N,\bsz_{\{1{:}R\}},\bsDelta_{\{1{:}R\}},\Acal_d(N_*)}):=\|f-A^{\rand}_{N,\bsz_{\{1{:}R\}},\bsDelta_{\{1{:}R\}},\Acal_d(N_*)}(f)\|_{L_2},
\]
and examine under which circumstances we can overcome the curse of dimensionality, depending on the decay of the weights $\bsgamma$.

Let $f\in\Hcal_{d,\alpha,\bsgamma}$ be given and fixed.
By \Cref{cor:error_bound}, for a given $\tau > 0$ and $\delta \in (0,1)$, we can choose $R$ and $N$ such that, with probability at least $1 - \delta$,
\begin{align*}
& [\mathrm{err}(f, L_2, A^{\rand}_{N,\bsz_{\{1{:}R\}},\bsDelta_{\{1{:}R\}},\Acal_d(N_*)})]^2 \\
&\qquad \le \norm{f}_{d,\alpha,\bsgamma}^2
\frac{C_N(\tau,\delta,\alpha) }{M^{2\alpha}} 
    \exp(2\alpha\tau^{-1})
    (P_N (\tau,d,\bsgamma))^{2\alpha} .
\end{align*}
We now choose $\tau>0$, such that
\[
  \tau^{-1}:=\eta^{-1}\, G_d,\quad \mbox{where}\quad G_d:= 2\sum_{j=1}^d \gamma_j^{1/(2\alpha)},
\]
for some absolute constant $\eta>0$. With this choice of $\tau$, we obtain
\begin{align*}
P_N (\tau,d,\bsgamma) &= 
\prod_{j=1}^d \left(1+ 2\gamma_j^{1/(2\alpha)} (1+\eta\,G_d^{-1}\log N)\right)\\
&= 
   \exp \left(\sum_{j=1}^d \log \left(1+ 2\gamma_j^{1/(2\alpha)} (1+\eta\,G_d^{-1}\log N)\right)\right)\\
&\le 
   \exp \left(\sum_{j=1}^d 2\gamma_j^{1/(2\alpha)} (1+\eta\,G_d^{-1}\log N)\right)\\
&= \exp(\, G_d) \, N^\eta.
\end{align*}
Hence, for a given $\delta \in (0,1)$, we can choose $R$ and $N$ such that, with probability at least $1 - \delta$,
\begin{align}\label{eq:err_bound}
       & [\mathrm{err}(f, L_2, A^{\rand}_{N,\bsz_{\{1{:}R\}},\bsDelta_{\{1{:}R\}},\Acal_d(N_*)})]^2 \notag \\
& \qquad \le \norm{f}_{d,\alpha,\bsgamma} \frac{C_N( G^{-1}_d \eta ,\delta,\alpha) }{M^{2\alpha}} \exp(2\alpha(\eta^{-1}+1)\, G_d)  N^{2\alpha\eta}. 
\end{align}

Let us consider two special cases regarding the decay of the weights $\bsgamma$.

\begin{case}
Suppose that $\sum_{j=1}^\infty \gamma_j^{1/(2\alpha)} < \infty$. 

In this case, we define $G_\infty:=2\sum_{j=1}^\infty \gamma_j^{1/(2\alpha)}$. Then, for all $d \in \NN$ we have $G_d \le G_\infty$, and the bound \eqref{eq:less_than_c} with $c = \exp(-1)$ is satisfied if
\[
 N \ge \exp\left(\, (4e\eta^{-1}+1)G_\infty+4e\right) \, N^\eta +1,
\]
which holds for sufficiently large $N$, independently of $d$, provided that $\eta \in (0,1)$. Moreover, with this choice of $N$, we can select an odd $R$ according to \eqref{eq:choose_R_coro}, again independently of $d$.

Finally, using $G_d \le G_\infty$ in \eqref{eq:err_bound}, we find that the right-hand side of \eqref{eq:err_bound} can be bounded by a quantity that is independent of $d$, with a convergence rate that can be made arbitrarily close to $M^{-2\alpha}$ by choosing $\eta$ sufficiently small.
\end{case}

\begin{case}
Suppose that ${\displaystyle \limsup_{d\to\infty} \left(\sum_{j=1}^d \gamma_j^{1/(2\alpha)}\right)/\log d < \infty}$.

Then, there exists a constant $D > 0$ such that $G_d \le D \log d$ for sufficiently large $d$. 
In this setting, the condition \eqref{eq:less_than_c} with $c = \exp(-1)$ is satisfied if
\[
 N \ge \exp\left(\, (4e\eta^{-1}+1)G_d+4e\right) \, N^\eta+1,
\]
which can be achieved by choosing $N$ that grows at most polynomially with $d$. Using this choice of $N$, we can also choose an odd $R$ satisfying \eqref{eq:choose_R_coro}, which then depends at most logarithmically on $d$.

Finally, from the error bound \eqref{eq:err_bound}, we observe that the dependence on $G_d$ leads to a bound that depends at most polynomially on $d$. Moreover, the convergence rate can be made arbitrarily close to $M^{-2\alpha}$ by selecting $\eta$ sufficiently small.
\end{case}

We summarize our findings in the following theorem.
\begin{theorem}\label{thm:error_bound}
    Let $f\in\Hcal_{d,\alpha,\bsgamma}$ and $\delta \in (0,1)$ be given. Denote by $M = RN$ the total number of function evaluations used by the algorithm $A^{\rand}_{N,\bsz_{\{1{:}R\}},\bsDelta_{\{1{:}R\}},\Acal_d(N_*)}$. Then for any $\eta \in (0,1)$, the following statements hold:
    \begin{itemize}
    \item If $\sum_{j=1}^\infty \gamma_j^{1/(2\alpha)} < \infty$, then there exist a prime $N$ and an odd $R$, both independent of $d$ (with $R$ depending logarithmically on $N$), such that with probability at least $1 - \delta$, 
    \begin{align*}
     & [\mathrm{err}(f, L_2, A^{\rand}_{N,\bsz_{\{1{:}R\}},\bsDelta_{\{1{:}R\}},\Acal_d(N_*)})]^2\\
    & \qquad \le \norm{f}_{d,\alpha,\bsgamma}\frac{C_N( G^{-1}_\infty \eta ,\delta,\alpha) }{M^{2\alpha}}
    \exp(2\alpha(\eta^{-1}+1) G_\infty)  N^{2\alpha\eta}.
    \end{align*}
    This means that the error bound for the $L_2$-approximation problem using $ A^{\rand}_{N,\bsz_{\{1{:}R\}},\bsDelta_{\{1{:}R\}},\Acal_d(N_*)}$ does not depend on the dimension $d$.

    \item If $\displaystyle \limsup_{d \to \infty} \left(\sum_{j=1}^d \gamma_j^{1/(2\alpha)}\right)/\log d < \infty$, then there exist a prime $N$ that depends at most polynomially on $d$, and an odd $R$ that depends at most logarithmically on $d$ and $N$, such that with probability at least $1 - \delta$,
    \begin{align*}
    & [\mathrm{err}(f, L_2, A^{\rand}_{N,\bsz_{\{1{:}R\}},\bsDelta_{\{1{:}R\}},\Acal_d(N_*)})]^2 \\
    & \qquad \le \norm{f}_{d,\alpha,\bsgamma}\frac{C_N( G^{-1}_d \eta ,\delta,\alpha) }{M^{2\alpha}}\,
    d^{2\alpha D(\eta^{-1}+1)} N^{2\alpha\eta},
    \end{align*}
    for some constant $D > 0$ independent of $d$ and $N$.
    This means that the error bound for the $L_2$-approximation problem using $ A^{\rand}_{N,\bsz_{\{1{:}R\}},\bsDelta_{\{1{:}R\}},\Acal_d(N_*)}$ depends only polynomially on $d$.
\end{itemize}
\end{theorem}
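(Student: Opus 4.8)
The plan is to assemble the claimed estimates directly from Corollary~\ref{cor:error_bound} together with the dimension-adapted choice of $\tau$ that was already carried out in the discussion preceding the theorem. Concretely, I would set $\tau^{-1} := \eta^{-1} G_d$ with $G_d = 2\sum_{j=1}^d \gamma_j^{1/(2\alpha)}$, and first record the key inequality $P_N(\tau,d,\bsgamma) \le \exp(G_d)\, N^{\eta}$, which follows by applying $\log(1+x)\le x$ termwise in the product defining $P_N(\tau,d,\bsgamma)$. Substituting this into Corollary~\ref{cor:error_bound} produces precisely the bound \eqref{eq:err_bound}. Hence the remaining work is purely to verify that, in each of the two summability regimes, the prime $N$ and the odd integer $R$ required by Corollary~\ref{cor:error_bound} (equivalently, by \eqref{eq:less_than_c} with $c=\exp(-1)$ and \eqref{eq:choose_R_coro}) can be chosen with the advertised dependence on $d$, and that the right-hand side of \eqref{eq:err_bound} then takes the stated form.

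For the first bullet I would use $G_d \le G_\infty := 2\sum_{j=1}^{\infty}\gamma_j^{1/(2\alpha)} < \infty$. The admissibility condition \eqref{eq:less_than_c} with $c=\exp(-1)$ reads $N \ge \exp\big((4e\eta^{-1}+1)G_\infty + 4e\big)\, N^{\eta} + 1$; since $\eta\in(0,1)$ the left-hand side grows strictly faster than the right, so there is a threshold $N_0 = N_0(\eta,\alpha,\delta,\bsgamma)$, independent of $d$, beyond which it holds, and a prime $N$ above $N_0$ exists by Bertrand's postulate. With this $N$, the admissible interval for $R$ in \eqref{eq:choose_R_coro} depends only on $N$ and $\delta$, so $R$ is independent of $d$ and logarithmic in $N$. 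Inserting $G_d \le G_\infty$ into \eqref{eq:err_bound} and using monotonicity of $C_N(\tau^{-1},\delta,\alpha)$ in $\tau^{-1}$ (so that $C_N(G_d^{-1}\eta,\delta,\alpha)\le C_N(G_\infty^{-1}\eta,\delta,\alpha)$) yields the claimed $d$-independent bound.

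For the second bullet I would replace $G_\infty$ by $G_d \le D\log d$, valid for all sufficiently large $d$ and some constant $D>0$. Then \eqref{eq:less_than_c} with $c=\exp(-1)$ becomes $N \ge \exp\big((4e\eta^{-1}+1)D\log d + 4e\big)\, N^{\eta} + 1$, i.e.\ it suffices that $N^{1-\eta}$ exceed a fixed multiple of $d^{(4e\eta^{-1}+1)D}$; as $1-\eta>0$, this is met by a prime $N$ that is polynomial in $d$ (again landing on a prime via Bertrand's postulate). The corresponding $R$ from \eqref{eq:choose_R_coro} is then $O(\log N) = O(\log d)$. Plugging $G_d\le D\log d$ into the factor $\exp(2\alpha(\eta^{-1}+1)G_d)$ appearing in \eqref{eq:err_bound} turns it into $d^{2\alpha D(\eta^{-1}+1)}$, while $N^{2\alpha\eta}$ is polynomial in $d$, so the whole bound is polynomial in $d$, as asserted.

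The argument is essentially bookkeeping once Corollary~\ref{cor:error_bound} is available; the only point requiring genuine care is that the condition \eqref{eq:less_than_c} is self-referential, since $N$ appears on both sides through $P_N(\tau,d,\bsgamma)$. This is harmless precisely because $P_N$ depends on $N$ only like $N^{\eta}$ with $\eta<1$, so a threshold (fixed-point) argument closes the inequality with $N$ of the advertised magnitude, and the existence of a prime of that magnitude then follows from Bertrand's postulate.
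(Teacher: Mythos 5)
Your proposal follows exactly the same route as the paper: choose $\tau^{-1}=\eta^{-1}G_d$, bound $P_N(\tau,d,\bsgamma)\le \exp(G_d)\,N^\eta$ via $\log(1+x)\le x$, substitute into Corollary~\ref{cor:error_bound} to get \eqref{eq:err_bound}, and then in each regime use $G_d\le G_\infty$ (resp.\ $G_d\le D\log d$) to show that \eqref{eq:less_than_c} with $c=\exp(-1)$ and \eqref{eq:choose_R_coro} admit choices of $N$ and $R$ with the stated $d$-dependence. The reasoning is correct and matches the paper's argument in all essentials; the only minor addition you make (a helpful one) is explicitly invoking monotonicity of $C_N$ in its first argument when passing from $G_d$ to $G_\infty$.
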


\begin{remark}
    When  $\sum_{j=1}^\infty \gamma_j^{1/(2\alpha)} < \infty$, a slight modification of \cite[Lemma 3]{hickernell2003existence} shows that, for any  $\tau,\eta>0$, there exists a constant $C(\bsgamma, \alpha, \tau, \eta)$, independent of both $d$ and $N$, such that
    \[
    P_N (\tau,d,\bsgamma) \le C(\bsgamma, \alpha, \tau, \eta) N^{\eta}.
    \]
    In particular, this implies that the $L_2$-approximation problem using $ A^{\rand}_{N,\bsz_{\{1{:}R\}},\bsDelta_{\{1{:}R\}},\Acal_d(N_*)}$ has an error bound that is independent of $d$ regardless of our choice of $\tau$.
\end{remark}

\begin{remark}
    The lower bound on $N$ derived in this section is sub-optimal and is used solely for the purpose of analyzing the dependence on the dimension. For practical implementation, one should refer to \Cref{rmk:choosingRN} for guidance on selecting $N$.
\end{remark}

\begin{remark}\label{rmk:choosingtau}
    As suggested by the above analysis, one should choose $\tau$ proportional to $G_d^{-1}$ to avoid the curse of dimensionality, particularly when $\sum_{j=1}^d \gamma_j^{1/(2\alpha)}$ diverges as $d \to \infty$.

    Given a computational budget $M_{\max}$ for $M = RN$, one may choose $\tau$ using the following procedure. First, estimate $R$ via \eqref{eq:choose_R_coro}, and let $N_{\max}$ be the largest prime $N$ such that
    \begin{equation}\label{eq:maximum_prime}
    N\left(2\log \left(1 + \frac{N-1}{4e}\right)+2\log(\delta^{-1})+1\right)\leq M_{\max}.
    \end{equation}
    To ensure \eqref{eq:less_than_c} holds with $N = N_{\max}$ and $c = \exp(-1)$, we require
    \[ 
    \inf_{\tau>0}\exp \left(4e\tau^{-1}\right)\,P_{N_{\max}} (\tau,d,\bsgamma)\leq \exp(-4e)\left(N_{\max}-1\right).
    \]
    Since
    \begin{equation}\label{eq:logderiative1}
        \tau\frac{d}{d\tau}\log\left(\exp \left(4e\tau^{-1}\right)P_{N_{\max}} (\tau,d,\bsgamma)\right)=-\frac{4e}{\tau}+\sum_{j=1}^d \frac{2\gamma_j^{1/(2\alpha)}\tau \log N_{\max} }{1+ 2\gamma_j^{1/(2\alpha)} (1+\tau\log N_{\max})}
    \end{equation}
    is strictly increasing in $\tau>0$, it has a unique zero $\tau_0$ and we have
    \[ 
    \inf_{\tau>0}\exp \left(4e\tau^{-1}\right)P_{N_{\max}} (\tau,d,\bsgamma)=\exp \left(4e{\tau_0}^{-1}\right)P_{N_{\max}} (\tau_0,d,\bsgamma).
    \]
    We check whether \eqref{eq:less_than_c} holds with $\tau = \tau_0$, $N = N_{\max}$, and $c = \exp(-1)$. If not, this suggests that $M_{\max}$ is too small and must be increased.
    For sufficiently large $M_{\max}$ and the corresponding $N_{\max}$, the equation
    \begin{equation*}
     \exp \left(4e\tau^{-1}\right) P_{N_{\max}} (\tau,d,\bsgamma)=\exp(-4e)\left(N_{\max}-1\right)   
    \end{equation*}
    admits two solutions $\tau_1 < \tau_2$ (or a degenerate solution $\tau_1 = \tau_2 = \tau_0$), and \eqref{eq:less_than_c} holds for any $\tau \in [\tau_1, \tau_2]$.
    
    Next, to maximize $N_*$ under $N = N_{\max}$ and $\tau \in [\tau_1, \tau_2]$, we choose
    \[ 
     \tau_*=\arg\max_{\tau\in [\tau_1,\tau_2]}\frac{N_{\max}-1}{\exp(\tau^{-1})P_{N_{\max}} (\tau,d,\bsgamma)}=\arg\min_{\tau\in [\tau_1,\tau_2]}\exp(\tau^{-1})P_{N_{\max}} (\tau,d,\bsgamma).
     \]
     Since
     \begin{equation}\label{eq:logderiative2}
         \tau\frac{d}{d\tau}\log\left(\exp \left(\tau^{-1}\right)P_{N_{\max}} (\tau,d,\bsgamma)\right)=-\frac{1}{\tau}+\sum_{j=1}^d \frac{2\gamma_j^{1/(2\alpha)}\tau \log N_{\max} }{1+ 2\gamma_j^{1/(2\alpha)} (1+\tau\log N_{\max})}
     \end{equation}
      is strictly increasing in $\tau>0$, it has a unique zero $\tau'_0$. Comparing \eqref{eq:logderiative1} and \eqref{eq:logderiative2}, we find $\tau'_0<\tau_0\leq \tau_2$, and hence $\tau_*=\max(\tau'_0,\tau_1)$. Moreover,
      \[ 
      \frac{1}{\tau'_0}=\sum_{j=1}^d \frac{2\gamma_j^{1/(2\alpha)}\tau'_0 \log N_{\max} }{1+ 2\gamma_j^{1/(2\alpha)} (1+\tau'_0\log N_{\max})}<d,
      \]
      so $\tau_*\geq \tau'_0>d^{-1}$.
      
      With this choice of $\tau = \tau_*$, $N = N_{\max}$, and $R$ the odd integer satisfying \eqref{eq:choose_R_coro}, our proposed algorithm solves the $L_2$-approximation problem within the budget $RN \leq M_{\max}$, and achieves an error of order
      \[ 
      N^{-\alpha}_*\sqrt{2\tau^{-1}+1 + \frac{2N}{N-1}\log(N-1)}\leq N^{-\alpha}_*\sqrt{2d+1 + \frac{2N}{N-1}\log(N-1)}
      \]
      with probability at least $1-\delta$. 
\end{remark}

\section{Numerical experiments}\label{sec:numerics}
Here, we present numerical experiments. As proven in \cite{byrenheid2017tight}, using a single rank-1 lattice rule with $M$ points cannot achieve a convergence rate better than $\mathcal{O}(M^{-\alpha/2})$ for the worst-case $L_2$-approximation error in any dimension $d\ge 2$. The main objective here is to observe the empirical performance of the proposed median lattice-based algorithm, with particular focus on whether it can attain a faster convergence rate in practice. To this end, we restrict our attention to the two-dimensional case, i.e., $d = 2$.

We test the following two periodic functions, both of which are used in \cite{CGK24}:
\begin{align*}
f_1(\bsx) &=  \prod_{j=1}^d \frac{121\sqrt{33}}{100}\max \left\{\frac{25}{121}-\left(x_j-\frac{1}{2} \right)^2, 0 \right \}, \\
f_2(\bsx) &= \prod_{j=1}^{d} \left(x_j-\frac{1}{2} \right)^2\sin(2\pi x_j-\pi),
\end{align*}
with $d=2$. The first test function, also used in \cite{kammerer2019approximation}, is a scaled periodized kink function that belongs to the space $\Hcal_{d,3/2-\epsilon,\bsgamma}$ for arbitrarily small $\epsilon > 0$. This means that the smoothness of this function is arbitrarily close to $3/2$. The second test function is smoother, belonging to the space $\Hcal_{d,5/2-\epsilon,\bsgamma}$ for arbitrarily small $\epsilon > 0$, indicating that the smoothness of this function is arbitrarily close to $5/2$.

To run \Cref{alg:median}, we set $\alpha = 3/2$ for $f_1$ and $\alpha = 5/2$ for $f_2$. For both test functions, we use the same weight parameters, $\gamma_1 = \gamma_2 = 1$. The total computational budget $M_{\max}$ is always chosen to be a power of 2. The number of points $N$ used in each lattice rule, the number of repetitions $R$, and the parameter $\tau$ are determined according to the procedure described in \Cref{rmk:choosingtau} with $\delta = 0.01$. Specifically, $N$ is set to the largest prime number satisfying \eqref{eq:maximum_prime}, and $\tau$ is set to $\tau_*$ as defined in \Cref{rmk:choosingtau}. The number of repetitions $R$ is then taken to be the largest odd integer such that $R \leq M_{\max}/N$. Since we are working in the two-dimensional unweighted setting, these parameter choices may not be optimal, but they are adopted for the sake of consistency and objectivity. We do not check throughout whether the condition \eqref{eq:less_than_c} with $c = \exp(-1)$ is satisfied.

Following \cite{CGK24}, we compute the squared $L_2$-approximation error exactly as
\begin{align*}
    &\|f-A^{\rand}_{N,\bsz_{\{1{:}R\}},\bsDelta_{\{1{:}R\}},\Acal_d(N_*)}(f)\|^2_{L_2}\\
    & = \sum_{\bsh\notin \mathcal{A}_d(N_*)} |\widehat{f}(\bsh)|^2+\sum_{\bsh\in \Acal_d(N_*)}|\widehat{f}_{N,\bsz_{\{1{:}R\}},\bsDelta_{\{1{:}R\}}}(\bsh)-\widehat{f}(\bsh)|^2\\
    & = \|f\|_{L_2}^2-\sum_{\bsh\in \mathcal{A}_d(N_*)} |\widehat{f}(\bsh)|^2+\sum_{\bsh\in \Acal_d(N_*)}|\widehat{f}_{N,\bsz_{\{1{:}R\}},\bsDelta_{\{1{:}R\}}}(\bsh)-\widehat{f}(\bsh)|^2,
\end{align*} 
where the $L_2$-norm and the Fourier coefficients of $f \in \{f_1, f_2\}$ are computed analytically in advance. With our choice $\delta = 0.01$, we expect that the error bound in \Cref{cor:error_bound} holds with probability at least $0.99$, so that we do not examine the stability of our algorithm by running it multiple times.

We now present the results of our numerical experiments. \Cref{fig:results} shows the $L_2$-approximation error of our median lattice-based algorithm plotted against $M$ on a log-log scale. In this plot, we set $M = N R$ using the actual values of $N$ and $R$, rather than using $M = M_{\max}$. The left and right panels correspond to the test functions $f_1$ and $f_2$, respectively. Each panel includes three reference lines: $M^{-\alpha/2}$, $M^{-3\alpha/4}$, and $M^{-\alpha}$. The first line represents the optimal rate possibly achievable by a lattice-based algorithm with a single rank-1 lattice point set, while the third line indicates the best possible rate for any algorithm with $M$ function evaluations. The second line lies between these two, serving as an intermediate reference.

For both $f_1$ and $f_2$, the observed convergence rate is approximately $M^{-3\alpha/4}$, which falls short of the expected, optimal rate $M^{-\alpha}$. Nevertheless, the decay is clearly faster than $M^{-\alpha/2}$. For $f_2$, we also observe non-asymptotic behavior: the error decays more slowly for small $M$, but gradually approaches the $M^{-3\alpha/4}$-rate as $M$ increases.

\begin{figure}[htbp]
    \centering
    \includegraphics[width=0.45\linewidth]{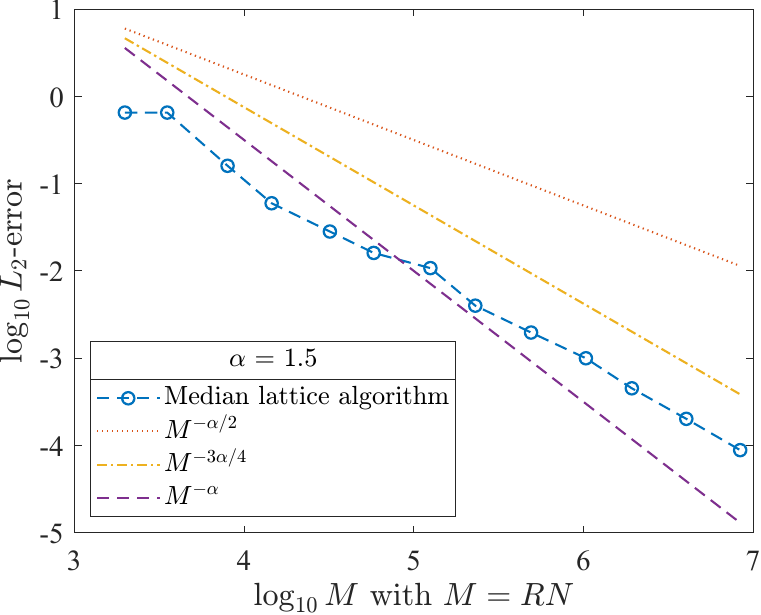}\, 
    \includegraphics[width=0.45\linewidth]{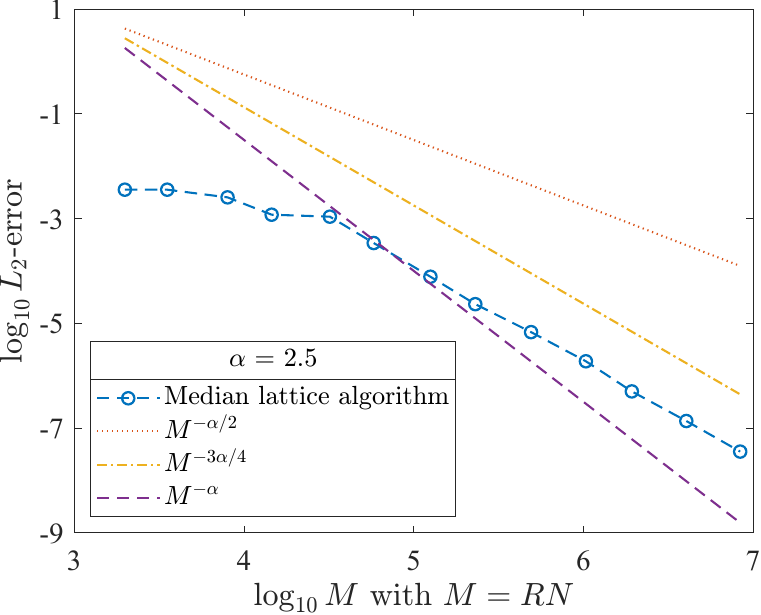}
    \caption{Results for the test functions $f_1$ (left) and $f_2$ (right): $L_2$-approximation error versus the total number of function evaluations $M$.}
    \label{fig:results}
\end{figure}

To confirm that the observed suboptimal convergence rates result from finite-sample effects, we present \Cref{fig:results2} and \Cref{fig:results3}. \Cref{fig:results2} shows the $L_2$-approximation error plotted against $N_*$, instead of $M$, on a log-log scale. \Cref{fig:results3} displays $N_*$ versus $M$, also on a log-log scale. Since we set $\gamma_1 = \gamma_2 = 1$, the value of $N_*$ is independent of $\alpha$ (see~\eqref{eqn:Nstar}), and therefore the plots for $f_1$ and $f_2$ are identical. For this reason, we present only a single plot in \Cref{fig:results3}. Note that, since \Cref{fig:results3} only illustrates how $N_*$ depends on $M$ and does not involve computing the $L_2$-approximation error, we extend the range of $M$ beyond that used in the other figures.

We observe from \Cref{fig:results2} that the convergence rate of $N_*^{-\alpha}$ is achieved for both $f_1$ and $f_2$. This agrees well with the theoretical result established in \Cref{thm:L2rate_constants}. On the other hand, \Cref{fig:results3} illustrates that the relationship between $N_*$ and $M$ is still suboptimal, which accounts for the slower convergence rate observed in \Cref{fig:results}. Ideally, as $M$ increases, $N_*$ should grow almost linearly with $M$. However, such behavior is not yet evident in the current range of $M$. For larger values of $M$, we expect $N_*$ to depend almost linearly on $M$, which in turn yields the optimal convergence rate of the $L_2$-approximation error with respect to $M$.

\begin{figure}[htbp]
    \centering
    \includegraphics[width=0.45\linewidth]{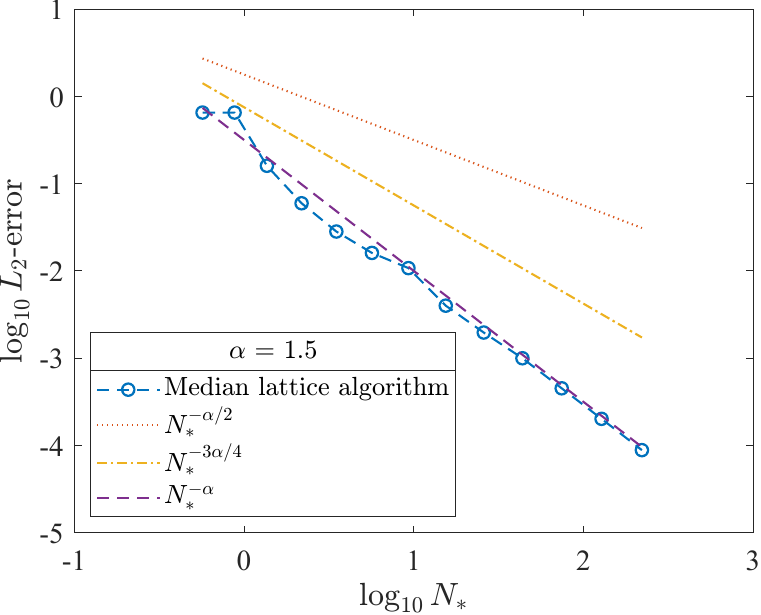}\, 
    \includegraphics[width=0.45\linewidth]{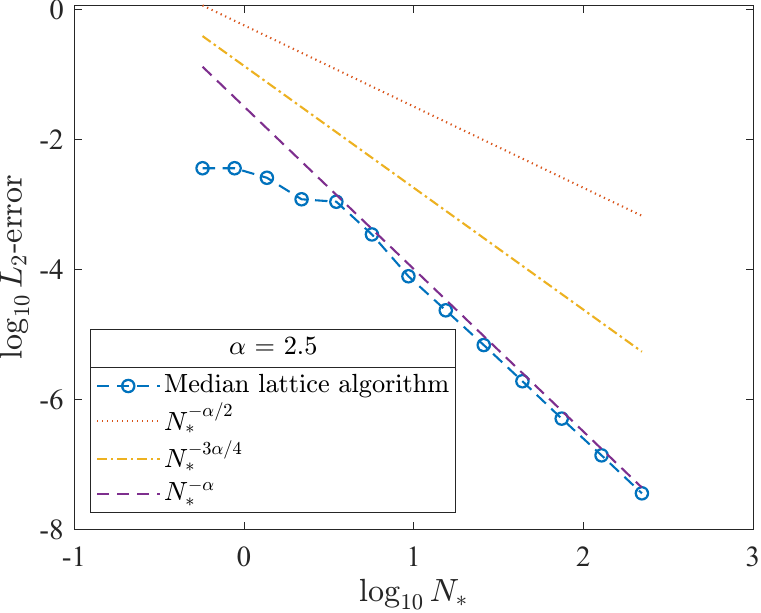}
    \caption{Results for the test functions $f_1$ (left) and $f_2$ (right): $L_2$-approximation error versus $N_*$.}
    \label{fig:results2}
\end{figure}

\begin{figure}[htbp]
    \centering
    \includegraphics[width=0.45\linewidth]{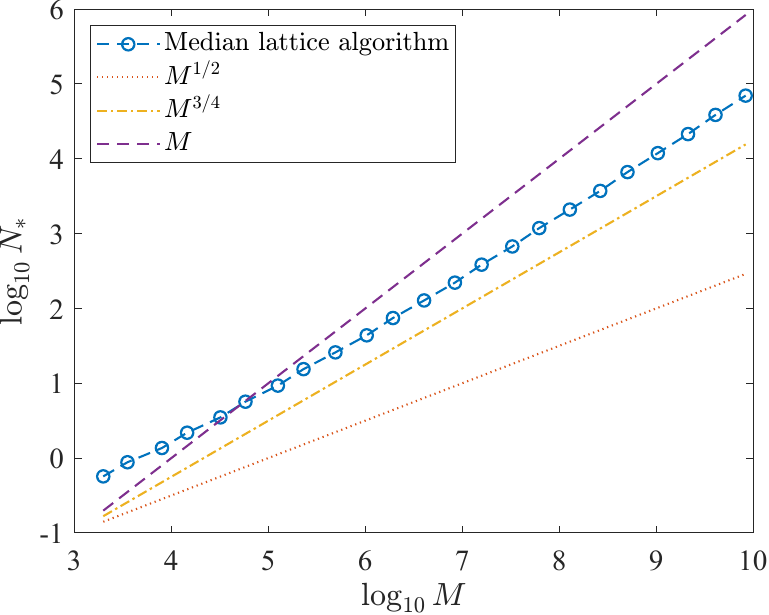}
    \caption{$N_*$ versus the total number of function evaluations $M$.}
    \label{fig:results3}
\end{figure}

\section{Discussion}\label{sec:discussion}

Thus far, we have analyzed $\mathrm{err}(f,L_2, A^{\rand}_{N,\bsz_{\{1{:}R\}},\bsDelta_{\{1{:}R\}},\Acal_d(N_*)})$, the error for a fixed $f\in\Hcal_{d,\alpha,\bsgamma}$, with a particular emphasis on the $(\epsilon,\delta)$-approximation framework \eqref{eq:eps-delta_framework}. Here, we first discuss how this result relates to the randomized error \eqref{eq:worst-case_randomized}.

Our choice of $\tau,N,R$ from \Cref{rmk:choosingtau} in fact implies a stronger bound on the failure probability. From  \eqref{eq:choose_R_first} and \eqref{eq:choose_R_coro}, the failure probability in \Cref{thm:L2rate_constants} is bounded by
      \begin{align}\label{eqn:failureasym}
           \Big(1+\frac{N-1}{1+\tau\log N_*}\Big)
    \left(\frac{4(1+\tau)}{1+\tau\log N_* }\right)^{\lceil R/2 \rceil} \leq  \delta \left(\frac{4e(1+\tau)}{1+\tau\log N_* }\right)^{\lceil R/2 \rceil}.
      \end{align}
Because $\tau=\tau_*$ attains the maximum $N_*$ over $\tau\in [\tau_1,\tau_2]$ while $\tau_0\in [\tau_1,\tau_2]$, 
     \begin{align*}
         \frac{4e(1+\tau)}{1+\tau\log N_* }\leq &\frac{4e(1+\tau)}{1+\tau\log(N-1)-\tau\log(P_N(\tau_0,d,\bsgamma))-\tau\tau^{-1}_0 } \\
         \leq &\frac{4e({\tau'_0}^{-1}+1)}{\log(N-1)-\log(P_N(\tau_0,d,\bsgamma))},
     \end{align*}
     where we have used $
    \tau=\tau_*=\max(\tau'_0,\tau_1)\leq \tau_0$. As $N=N_{\max}$ increases to infinity, \eqref{eq:logderiative1} shows that $\tau_0$ decreases to $4e/d$, and \eqref{eq:logderiative2} shows that $\tau'_0$ decreases to $1/d$. Because $R$ is proportional to $\log N$, the upper bound \eqref{eqn:failureasym} asymptotically converges to $0$ faster than any fixed polynomial rate in $M=NR$.

We can use the improved bound to show that our algorithm achieves a nearly-optimal convergence rate in terms of the randomized $L_2$-approximation error, while keeping good tractability properties as shown in the previous section. For any $f\in \Hcal_{d,\alpha,\bsgamma}$, 
we see that
\[ \|f\|_{L_2}^2 \leq \norm{f}_{d,\alpha,\bsgamma}^2 \sup_{\bsh\in \ZZ^d}\frac{1}{r_{2\alpha,\bsgamma} (\bsh)}=\norm{f}_{d,\alpha,\bsgamma}^2. \]
Moreover, by following \cite[Proof of Lemma~4.2]{CGK24}, it holds, for all $\bsh\in \Acal_d(N_*)$ and any $\bsz\in \{1{:}(N-1)\}^d$ and $\bsDelta\in [0,1)^d$, that
\begin{align*}
    & \left|\widehat{f}_{N,\bsz,\bsDelta}(\bsh)\right|^2 = \left|\sum_{\substack{\bsl\in \ZZ^d\\ \bsl-\bsh\in P_{N,\bsz}^\perp}}\widehat{f}(\bsl)\exp(2\pi\icomp (\bsl-\bsh)\cdot\bsDelta)\right|^2\\
    & \qquad \leq \sum_{\substack{\bsl\in \ZZ^d\\ \bsl-\bsh\in P_{N,\bsz}^\perp}}\left|\widehat{f}(\bsl)\exp(2\pi\icomp (\bsl-\bsh)\cdot\bsDelta)\right|^2 r_{2\alpha,\bsgamma} (\bsl) \sum_{\substack{\bsl'\in \ZZ^d\\ \bsl'-\bsh\in P_{N,\bsz}^\perp}}\frac{1}{r_{2\alpha,\bsgamma} (\bsl')}\\
    & \qquad \leq \norm{f}_{d,\alpha,\bsgamma}^2\sum_{\bsl'\in \ZZ^d}\frac{1}{r_{2\alpha,\bsgamma} (\bsl')} = \norm{f}_{d,\alpha,\bsgamma}^2\prod_{j=1}^{d}\left( 1+2\gamma_j\zeta(2\alpha)\right),
\end{align*}
where the first inequality follows from the Cauchy--Schwarz inequality.

Thus, for \emph{any} realization of $A^{\rand}_{N,\bsz_{\{1{:}R\}},\bsDelta_{\{1{:}R\}},\Acal_d(N_*)}$, the $L_2$-approximation error is always bounded by
    \begin{align*}
        & \|f-A^{\rand}_{N,\bsz_{\{1{:}R\}},\bsDelta_{\{1{:}R\}},\Acal_d(N_*)}(f)\|_{L_2} \\
        &\qquad \leq \|f\|_{L_2}+\left(\sum_{\bsh\in \Acal_d(N_*)}\left|\widehat{f}_{N,\bsz_{\{1{:}R\}},\bsDelta_{\{1{:}R\}}}(\bsh)\right|^2\right)^{1/2}\\
        &\qquad  \leq \norm{f}_{d,\alpha,\bsgamma}\left(1+\left(|\Acal_d(N_*)|\,  \prod_{j=1}^{d}\left( 1+2\gamma_j\zeta(2\alpha)\right)\right)^{1/2}\right),
    \end{align*}
where, according to \Cref{cor:AdNstarcount}, the size of $\Acal_d(N_*)$ is bounded by $N$.
This means that, even for the worst realization of $A^{\rand}_{N,\bsz_{\{1{:}R\}},\bsDelta_{\{1{:}R\}},\Acal_d(N_*)}$, the corresponding $L_2$-approximation error grows at most like $N^{1/2}$. 
However, the failure probability converges to $0$ faster than any fixed polynomial rate. In total, we have
    \begin{equation*}
        \sup_{\substack{f\in \Hcal_{d,\alpha,\bsgamma}\\ 
  \norm{f}_{d,\alpha,\bsgamma}\le 1}}
  \left(\e\|f-A^{\rand}_{N,\bsz_{\{1{:}R\}},\bsDelta_{\{1{:}R\}},\Acal_d(N_*)}(f)\|^2_{L_2}\right)^{1/2}=O(M^{-\alpha+\varepsilon}) 
    \end{equation*}
    for any $\varepsilon>0$, where the implied factor depends only polynomially on the dimension $d$, or is even independent of $d$, under the same summability conditions on the weights $\gamma_j$ we showed in \Cref{sec:tractability}.

According to the literature, this order of convergence is known to be nearly optimal for the following reasons.
In \cite{novak1992opitmal}, Novak proved that, in general separable Hilbert spaces, the optimal convergence rate of the randomized error coincides with that of the deterministic worst-case error, when algorithms are restricted to using (either random or deterministic) linear functionals as information.
Later, in \cite{krieg2021function}, Krieg and Ullrich showed that, for the Korobov space, the optimal rate of the $L_2$-approximation error---namely, $\mathcal{O}(M^{-\alpha})$---remains the same, up to logarithmic factors, whether algorithms use at most $M$ linear functionals or $M$ function evaluations.
Combining these two results, we conclude that the rate $\mathcal{O}(M^{-\alpha})$ is indeed best possible for the randomized setting with function evaluations.

Next, we discuss the worst-case error \eqref{eq:worst-case}.
Our main result, stated in \Cref{cor:error_bound}, shows that the median algorithm achieves an error bound of $\mathcal{O}(M^{-\alpha+\varepsilon})$ with high probability for individual functions. One might therefore expect that the same rate also holds for the worst-case error with high probability. However, this is not necessarily the case.
This situation is different from what was studied for the integration problem in \cite{goda2022free}, where a high-probability worst-case bound was established.
The key obstacle lies in the fact that our result guarantees the existence of a set of good randomized lattice algorithms, but this set may depend on the specific function being approximated. 
To obtain a worst-case error bound, a single set of randomized lattice algorithms must perform well for all functions in the unit ball of $\Hcal_{d,\alpha,\bsgamma}$. Consequently, extending our result to a nearly optimal rate for the worst-case error is nontrivial; straightforward arguments only yield a deteriorated rate of convergence.
Whether or not the proposed algorithm can achieve a nearly optimal rate for the worst-case error with high probability remains an open question for future work.

A further limitation of our algorithm lies in its explicit dependence on the parameters $\alpha$ and $\gamma_j$ to determine $N_*$, as mentioned in \Cref{rem:limitation}. This dependency introduces a practical challenge: the convergence rate is heavily influenced by the choice of $\alpha$ and $\gamma_j$. For instance, assuming a slower decay rate for $\gamma_j$ than necessary leads to an overly conservative $N_*$, which in turn degrades the convergence rate. A more desirable solution would be a universal algorithm that simultaneously achieves near-optimal convergence rates of order $\mathcal{O}(M^{-\alpha})$ and dimension-independent convergence under suitable decay conditions on $\gamma_j$, all without requiring prior knowledge of $\alpha$ and $\gamma_j$. Designing such an algorithm, however, poses a significant challenge, as it would need to adapt implicitly to the intrinsic smoothness and weight structure of the target function space. We leave this critical open problem for future investigation.

\section*{Acknowledgements}
The first and second authors acknowledge the support of the Austrian Science Fund (FWF) Project  P 34808/Grant DOI: 10.55776/P34808. The third author acknowledges the support of JSPS KAKENHI Grant Number 23K03210. For open access purposes, the authors have applied a CC BY public copyright license to any author accepted manuscript version arising from this submission. Moreover, the authors would like to thank F.~Bartel, D.~Krieg, and I.H.~Sloan for valuable comments. The second author would like to thank F.Y.~Kuo and I.H.~Sloan for their hospitality during his visit to UNSW Sydney, where parts of this paper were written.


\end{document}